\newtheorem{thm}{Theorem}[section]
\newtheorem{prop}[thm]{Proposition}
\theoremstyle{definition}
\newtheorem{rem}[thm]{Remark}
\newtheorem{assum}[thm]{Assumption}
\newtheorem{rhp}[thm]{Riemann-Hilbert Problem}
\def\@biblabel#1{[#1]}
\makeatletter \@addtoreset{equation}{section}
\begin{document}
%\begin{CJK*}{GBK}{song}

\begin{titlepage}
\title{\bf{Long-time asymptotics for the Elastic Beam equation in the solitonless region via $\bar{\partial}$ methods
\footnote{%Project supported by the Fundamental Research Fund  for Talents Cultivation Project of the China University of Mining and Technology under grant number YC150003.\protect\\
%\hspace*{3ex} $^{*}$
Corresponding authors.\protect\\
\hspace*{3ex} \emph{E-mail addresses}: ychen@sei.ecnu.edu.cn (Y. Chen)}
}}
\author{Wei-Qi Peng$^{1}$, Yong Chen$^{1,2,*}$\\
%%%%%%%%%%%%%%%%%%%%%%%%%%%%%%%%%%%%%%%%%%%%%%%%%%%%%%%%%%%%%%%%%%%%%%%%%%%%%%%%%%%%%%%%%
%%%%%              以下两行为作者单位
%%%%%%%%%%%%%%%%%%%%%%%%%%%%%%%%%%%%%%%%%%%%%%%%%%%%%%%%%%%%%%%%%%%%%%%%%%%%%%%%%%%%%%%%%
\small \emph{$^{1}$School of Mathematical Sciences, Key Laboratory of Mathematics and} \\
\small \emph{ Engineering Applications(Ministry of Education) \& Shanghai Key Laboratory of PMMP,}\\
\small \emph{  East China Normal University, Shanghai 200241, China}\\
\small \emph{ $^{2}$College of Mathematics and Systems Science, Shandong University }\\
\small \emph{of Science and Technology, Qingdao, 266590, China}
\date{}}

\thispagestyle{empty}
\end{titlepage}
\maketitle

\vspace{-0.5cm}
\begin{center}
\rule{15cm}{1pt}\vspace{0.3cm}

\parbox{15cm}{\small
{\bf Abstract}\\
\hspace{0.5cm}  In this work, we study the Cauchy problem of
the Elastic Beam equation with initial value in weighted Sobolev space $H^{1,1}(\mathbb{R})$ via the $\bar{\partial}$-steepset descent method. Begin with the Lax pair of the Elastic Beam equation, we successfully derive the basic Riemann-Hilbert problem, which can be used to represent the solutions
of the Elastic Beam equation. Then, considering the solitonless region and using the $\bar{\partial}$-steepset descent method, we analyse the long-time asymptotic behaviors of the solutions for the Elastic Beam equation.}

\vspace{0.5cm}
\parbox{15cm}{\small{

\vspace{0.3cm} \emph{Key words:}  Elastic Beam equation; Riemann-Hilbert problem; Long-time asymptotics; $\bar{\partial}$-steepset descent method.\\

\emph{PACS numbers:}  02.30.Ik, 05.45.Yv, 04.20.Jb. } }
\end{center}
\vspace{0.3cm} \rule{15cm}{1pt} \vspace{0.2cm}

%\linenumbers

\tableofcontents

\section{Introduction}
The present work is devoted to the study of the Cauchy problem for the Elastic Beam (EB) equation\cite{Brunelli-JMP}
\begin{align}\label{1}
\begin{cases}
q_{t}=\left(\frac{q_{xx}}{\left(1+q_{x}^{2}\right)^{\frac{3}{2}}}\right)_{x},\quad (x,t)\in \mathbb{R}\times(0,+\infty),\\
q(x,0)=q_{0}(x)\in H^{1,1}(\mathbb{R})
\end{cases}
\end{align}
by the $\bar{\partial}$-steepest descent methods, where the weighted Sobolev space $H^{1,1}(\mathbb{R})$ is defined as
\begin{align}\label{1.1}
&H^{1,1}(\mathbb{R})=L^{2,1}(\mathbb{R})\cap H^{1}(\mathbb{R}),\notag\\
&L^{2,1}(\mathbb{R})=\left\{\left(1+\left\vert \cdot\right\vert^{2}\right)^{\frac{1}{2}}f\in
L^{2}(\mathbb{R})\right\},\notag\\
&H^{1}(\mathbb{R})=\left\{f\in L^{2}(\mathbb{R})|f'\in L^{2}(\mathbb{R})\right\}.
\end{align}
Eq. \eqref{1} admits a Lax pair and belongs to the Wadati-Konno-Ichikawa(WKI) system \cite{Brunelli-JMP4} and its $x$ derivative can be used to describe the nonlinear transverse oscillation of an elastic beam under the action of tension \cite{Brunelli-JMP5}. Therefore, Eq. \eqref{1} can be simply called the EB equation. Notably, the EB equation and  short pulse equation were appeared on the same
hierarchy equations corresponding to negative  and positive  flows \cite{Brunelli-JMP,Brunelli-BJP10}. In Ref.\cite{Brunelli-JMP}, author has shown that these systems possess the  bi-Hamiltonian structure, and they used recursion to construct infinite series of local and non-local conserved charges and the corresponding hierarchy of equations. Besides, a Lax pair for the EB equation was given and the nonlocal versions of EB equation was introduced by Brunelli\cite{Brunelli-BJP10}. If we take the derivative of both sides of the Eq.\eqref{1} with respect to $x$, and take $u(x,t)=q_{x}(x,t)$, the Eq.\eqref{1} finally can be written as
\begin{align}\label{1.2}
u_{t}=\left(\frac{u_{x}}{\left(1+u_{x}^{2}\right)^{\frac{3}{2}}}\right)_{xx},
\end{align}
which is a real WKI-II equation and is related to the motion of non-stretching plane curves in Euclidean geometry $\textbf{E}^{2}$ and $\textbf{E}^{3}$ \cite{Lin-PD13,Lin-PD14}.  The properties of loop solitons of formula \eqref{1.2} and their correlation with Harry Dym equation are found in Refs. \cite{Lin-PD15,Lin-PD16} by the inverse scattering method.

Although the high nonlinearity of Eq.\eqref{1} makes the study of the equations very difficult, the inverse scattering transform based on the Riemann-Hilbert (RH) problem can be used to solve such integrable equations.  The RH problem is still very active today after decades of development and many successful advances were made in  the domain of integrable systems \cite{Wang16,Wang17,Wang18,Wang19,Wang20,Wang21}. In particular, a series of major breakthroughs have been made in the asymptotic behavior of the solution via applying the RH method \cite{MaWemXiu-1,MaWemXiu-2,MaWemXiu-3,MaWemXiu-4,MaWemXiu-5}. Of which the emergence of nonlinear steepest descent method, also known as the Deift-Zhou method inspired by the classic steepest descent method and Its earlier work \cite{MaWemXiu-6}, was a milestone  for analysing the long time asymptotic behavior of the solution \cite{Wang23}.  Since then, Deift, Venakides and Zhou have further developed this approach \cite{MaWemXiu-8,MaWemXiu-9,MaWemXiu-10}, and a large number of researchers have used the nonlinear steepest descent method to study asymptotic analysis of various nonlinear integrable equations
\cite{Peng-17,Peng-18,Peng-19,Peng-20,Peng-21,Peng-22,Peng-23}. It is worth noting that McLaughlin and  Miller further proposed the $\bar{\partial}$-steepest descent method on the basis of the nonlinear descent method \cite{Tian-wang24,Tian-wang25,Tian-wang26}. Compared with the nonlinear
steepest descent method, the $\bar{\partial}$-steepest descent method avoids the fine estimates of $L_{p}$ estimates of Cauchy projection operators and the initial value is in a larger space.  With the
sustainable development of the $\bar{\partial}$-steepest descent technique, more and more research has been done on it,
containing the defocusing and focusing  NLS equation \cite{Tian-wang27, Li-WKI42}, KdV equation\cite{Tian-wang28},  SP equation\cite{Tian-wang30},  Fokas-Lenells equation\cite{Tian-wang31}, modified Camassa-Holm equation\cite{Tian-wang32}, WKI eqaution \cite{Tian-wang33,Tian-wang34} et al.
In this work, we aim to investigate the long-time asymptotic behavior of the EB equation \eqref{1} with the initial value condition belonging to the weighted Sobolev space $H^{1,1}(\mathbb{R})$ through the $\bar{\partial}$-steepest descent method.

\textbf{Organization of the Rest of the Work}:
In Sect. 2, we carry out spectral analysis by introducing two
kinds of eigenfunctions including near $\lambda=\infty$ and $\lambda=0$. Further, similar to the Refs. \cite{Li-WKI55,Geng-PDLi-WKI55}, through analysing
analyticity, symmetries and asymptotic properties of  eigenfunctions, we can construct the basic RH problem $M(\lambda)$
for the EB equation \eqref{1.1}.
In Sect. 3, according to phase analysis, we perform first deformation of the basic RH problem by introducing the matrix function $\delta(\lambda)$. Then, constructing a matrix function $R^{(2)}(\lambda)$, we present the continuous extensions and derive the mixed $\bar{\partial}$-RH problem, which can be
decomposed  into
a pure RH problem with $\bar{\partial}R^{(2)}=0$ and a pure $\bar{\partial}$-RH problem with $\bar{\partial}R^{(2)}\neq 0$. Furthermore, the pure RH problem can be solved through establishing the local solvable model near the phase point $\pm\lambda_{0}$ and estimating the error function $E(\lambda)$ with a small norm RH problem. The error term can be derived by studying the pure $\bar{\partial}$-RH problem.
Finally, we get the long-time asymptotic behavior for the EB equation \eqref{1}.

\section{Spectral analysis and basic Riemann-Hilbert problem}
In this part, we need to perform the spectral analysis and construct the basic RH problem for
the EB equation which admits the Lax pair
\begin{align}\label{2}
\Psi_{x}=U\Psi,\qquad \Psi_{t}=V\Psi,
\end{align}
where
\begin{align}\label{3}
U=\left(\begin{array}{cc}
    i\lambda  &  i\lambda q_{x}\\
    i\lambda q_{x}  &  -i\lambda\\
\end{array}\right),\qquad  V=\left(\begin{array}{cc}
    A  &  B\\
    C  &  -A\\
\end{array}\right)
\end{align}
and
\begin{align}\label{4}
&A=-4i\lambda^{3}m^{-\frac{1}{2}}, B=i\lambda(q_{xx}m^{-\frac{3}{2}})_{x}-2\lambda^{2} q_{xx}m^{-\frac{3}{2}}-4i\lambda^{3}q_{x}m^{-\frac{1}{2}},\notag\\
&C=i\lambda(q_{xx}m^{-\frac{3}{2}})_{x}+2\lambda^{2} q_{xx}m^{-\frac{3}{2}}-4i\lambda^{3}q_{x}m^{-\frac{1}{2}},\ m=1+q_{x}^{2}.
\end{align}

Firstly, we discuss the eigenfunctions near $\lambda=\infty$ of the Lax pair \eqref{2}. According to the idea provided by Boutet de Monvel
and Shepelsky \cite{Li-WKI53,Li-WKI54,Li-WKI57}, we introduce a transformation to modify the Lax pair
of the EB equation \eqref{1}, given by
\begin{align}\label{5}
\Phi(x,t;\lambda)=G(x,t)\Psi(x,t;\lambda),
\end{align}
where
\begin{align}\label{6}
G(x,t)=\sqrt{\frac{1+\sqrt{m}}{2\sqrt{m}}}\left(\begin{array}{cc}
    1  &  \frac{\sqrt{m}-1}{q_{x}}\\
    -\frac{\sqrt{m}-1}{q_{x}}  &  1\\
\end{array}\right),
\end{align}
then, we get the Lax pair related to $\Phi(x,t;\lambda)$
\begin{align}\label{7}
&\Phi_{x}(x,t;\lambda)=(i\lambda\sqrt{m}\sigma_{3}+U^{\infty})\Phi(x,t;\lambda),\notag\\
&\Phi_{t}(x,t;\lambda)=\left[\left(-4i\lambda^{3}+i\lambda\frac{2mq_{x}q_{xxx}-6q_{xx}^{2}q_{x}^{2}-q_{xx}^{2}}{2m^{3}}\right)\sigma_{3}+V^{\infty}\right]\Phi(x,t;\lambda),
\end{align}
where $\sigma_{3}$ is the third Pauli matrix
\begin{align}\label{8}
& \sigma_{1}=\left(\begin{array}{cc}
    0  &  1\\
    1  &  0\\
\end{array}\right),\quad \sigma_{2}=\left(\begin{array}{cc}
    0  &  -i\\
    i  &  0\\
\end{array}\right),\quad \sigma_{3}=\left(\begin{array}{cc}
    1  &  0\\
    0  &  -1\\
\end{array}\right),\quad U^{\infty}=\left(\begin{array}{cc}
    0  &  \frac{q_{xx}}{2m}\\
    -\frac{q_{xx}}{2m}  &  0\\
\end{array}\right),\notag\\
&V^{\infty}=\frac{i\lambda q_{xx}^{2}}{2m^{3}}\sigma_{3}+\left(\begin{array}{cc}
    0  &  -2\lambda^{2}\frac{q_{xx}}{m^{\frac{3}{2}}}+\frac{i\lambda}{m^{3}}(mq_{xxx}-3q_{x}q_{xx}^{2})+\frac{q_{xt}}{2m}\\
    2\lambda^{2}\frac{q_{xx}}{m^{\frac{3}{2}}}+\frac{i\lambda}{m^{3}}(mq_{xxx}-3q_{x}q_{xx}^{2})-\frac{q_{xt}}{2m}  &  0\\
\end{array}\right).
\end{align}
Since the EB
equation \eqref{1} arrives at the conservation law
\begin{align}\label{9}
\left(\sqrt{m}\right)_{t}=\left(\frac{2mq_{x}q_{xxx}-6q_{xx}^{2}q_{x}^{2}-q_{xx}^{2}}{2m^{3}}\right)_{x},
\end{align}
we can denote $p(x, t; \lambda)$ as
\begin{align}\label{10}
p(x, t; \lambda)=x-\int_{x}^{\infty}\left(\sqrt{m(s,t)}-1\right)\mathrm{d}s-4\lambda^{2}t,
\end{align}
which satisfies
\begin{align}\label{11}
p_{x}(x, t; \lambda)=\sqrt{m},\ p_{t}(x, t; \lambda)=\frac{2mq_{x}q_{xxx}-6q_{xx}^{2}q_{x}^{2}-q_{xx}^{2}}{2m^{3}}-4\lambda^{2},
\end{align}
and
\begin{align}\label{12}
p_{xt}(x, t; \lambda)=p_{tx}(x, t; \lambda).
\end{align}
Through defining $\mu=\Phi e^{-i\lambda P\sigma_{3}}$, the Lax pair \eqref{7} becomes
\begin{align}\label{13}
\mu_{x}(x, t; \lambda)=i\lambda p_{x}(x, t; \lambda)\left[\sigma_{3},\mu(x, t; \lambda)\right]+U^{\infty}\mu(x, t; \lambda),\notag\\
\mu_{t}(x, t; \lambda)=i\lambda p_{t}(x, t; \lambda)\left[\sigma_{3},\mu(x, t; \lambda)\right]+V^{\infty}\mu(x, t; \lambda),
\end{align}
which can be solved by the following Volterra type integral equations
\begin{align}\label{14}
\mu_{\pm}(x, t; \lambda)=\mathbb{I}+\int_{\pm \infty}^{x}e^{i\lambda\left[p\left(x, t; \lambda\right)-p\left(y, t; \lambda\right)\right]\hat{\sigma}_{3}}U^{\infty}(y, t; \lambda)\mu_{\pm}(y, t; \lambda)\mathrm{d}y,
\end{align}
where $e^{\hat{\sigma}_{3}}D=e^{\sigma_{3}}De^{-\sigma_{3}}$. Following the above analysis, we can derive
the properties of the eigenfunctions $\mu_{\pm}(x, t; \lambda)$.
\begin{prop}
 {\rm (Analytic property)} Let $q(x)-q_{0}\in H^{1,1}(\mathbb{R})$, it is easy to find that
 $\mu_{+,1}, \mu_{-,2}$ are analytic in $\mathbb{C}_{+}$ and $\mu_{-,1}, \mu_{+,2}$ are analytic in $\mathbb{C}_{-}$. $\mathbb{C}_{\pm}$ represent the upper and lower complex $\lambda$-plane, and $\mu_{\pm,j}(j=1, 2)$ represent the $j$-th column of $\mu_{\pm}$.
\end{prop}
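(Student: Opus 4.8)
The plan is to work directly from the Volterra integral representation \eqref{14} and argue column by column. Writing $\mu_{\pm}=(\mu_{\pm,1},\mu_{\pm,2})$ and using that $U^{\infty}$ in \eqref{8} is purely off-diagonal, I first unfold the conjugation $e^{i\lambda[p(x,t;\lambda)-p(y,t;\lambda)]\hat{\sigma}_{3}}$ into scalar phase factors. For an off-diagonal source the $(1,2)$ and $(2,1)$ entries pick up $e^{2i\lambda\theta}$ and $e^{-2i\lambda\theta}$ respectively, with $\theta=\theta(x,y,t):=p(x,t;\lambda)-p(y,t;\lambda)$, so the two columns decouple into coupled scalar systems whose only $\lambda$-dependence outside the unknowns sits in a single exponential $e^{\pm 2i\lambda\theta}$. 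This reduces the problem to controlling the sign of the real part of that exponent on each integration domain.

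The key observation is that the $\lambda$-dependent term $-4\lambda^{2}t$ in \eqref{10} cancels in the difference, so that $\theta=p(x,t;\lambda)-p(y,t;\lambda)=-\int_{x}^{y}\sqrt{m(s,t)}\,\mathrm{d}s$ is independent of $\lambda$ and, since $m=1+q_{x}^{2}\ge 1$, has a definite sign on each domain of integration. For $\mu_{+}$ the integral runs over $y\ge x$, giving $\theta\le 0$, while for $\mu_{-}$ it runs over $y\le x$, giving $\theta\ge 0$. Writing $\lambda=\xi+i\eta$, the factor $e^{-2i\lambda\theta}$ (which feeds the lower component of the first column) has modulus $e^{2\eta\theta}$, and $e^{2i\lambda\theta}$ (feeding the second column) has modulus $e^{-2\eta\theta}$. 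Matching signs then shows: for $\mu_{+,1}$ boundedness forces $\eta\ge 0$, for $\mu_{+,2}$ it forces $\eta\le 0$, for $\mu_{-,1}$ it forces $\eta\le 0$, and for $\mu_{-,2}$ it forces $\eta\ge 0$. This is exactly the half-plane assignment in the statement, each relevant exponential having modulus at most $1$ on the claimed domain.

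With boundedness of the phase in hand, I would set up the usual Neumann (Volterra) iteration for each column and prove uniform convergence on compact subsets of the closed half-plane. The crucial estimate is that the off-diagonal entries of $U^{\infty}$, namely $\pm q_{xx}/(2m)$, are integrable on $\mathbb{R}$; under the weighted-Sobolev hypothesis this follows by Cauchy--Schwarz, $\int_{\mathbb{R}}|q_{xx}|\,\mathrm{d}y\le \|(1+|\cdot|)^{-1}\|_{L^{2}}\,\|(1+|\cdot|)q_{xx}\|_{L^{2}}<\infty$, after which $|m|\ge 1$ makes the denominator harmless. Because $e^{\pm 2i\lambda\theta}$ is bounded by $1$ on the relevant half-plane, the $n$-th Volterra iterate is dominated by $\bigl(\tfrac12\int|q_{xx}|/m\bigr)^{n}/n!$, so the series converges absolutely and uniformly there.

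Finally, I would invoke analyticity term by term: for fixed $x,t$ each iterate is an integral of a function that is entire in $\lambda$ (the phase $e^{\pm 2i\lambda\theta}$ is entire and the integrand is continuous), hence analytic in the open half-plane; the uniform limit of analytic functions is analytic by the Morera/Weierstrass theorem, giving analyticity of the limiting column in the interior and continuity up to the boundary. The main obstacle is the uniform-convergence step in the non-compact $y$-variable: one must ensure the $L^{1}$ bound on $U^{\infty}$ is genuinely supplied by the $H^{1,1}$ assumption (this is where the weight is used) and that the phase estimate $|e^{\pm 2i\lambda\theta}|\le 1$ is uniform as $y\to\pm\infty$, which is precisely the content of the sign analysis above. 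Everything else is the standard Volterra machinery.
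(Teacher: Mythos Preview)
Your approach is correct and is exactly the standard Volterra--Neumann iteration argument that the paper has in mind; the paper does not actually supply a proof of this proposition beyond the phrase ``it is easy to find,'' so there is nothing further to compare. Your sign analysis of the phase $\theta=p(x,t;\lambda)-p(y,t;\lambda)=\int_{y}^{x}\sqrt{m}\,\mathrm{d}s$ and the resulting half-plane assignments are accurate, and the termwise analyticity plus uniform convergence argument is the right mechanism.

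One small caveat worth recording: the $L^{1}$ bound you invoke for $q_{xx}/(2m)$ is not literally delivered by $q\in H^{1,1}(\mathbb{R})$ as defined in \eqref{1.1}, since that space only controls $q$ and $q_{x}$ in $L^{2}$ (with a weight on $q$), not $q_{xx}$. Your Cauchy--Schwarz estimate presumes $(1+|\cdot|)q_{xx}\in L^{2}$, i.e.\ essentially $q_{x}\in H^{1,1}$. This is a looseness in the paper's stated hypothesis rather than in your argument; in the WKI/short-pulse literature the working assumption is typically that the potential entering $U^{\infty}$ lies in $L^{1}\cap L^{2}$, and one should read the hypothesis accordingly.
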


\begin{prop}
 {\rm (Symmetry property)} The eigenfunctions $\mu_{\pm}(x, t; \lambda)$ posses
the following symmetry relation
\begin{align}\label{15}
\mu_{\pm}(x, t; \lambda)=\overline{\mu_{\pm}}(x, t; -\overline{\lambda})=\sigma_{2}\overline{\mu_{\pm}}(x, t; \overline{\lambda})\sigma_{2},
\end{align}
where the $\overline{D}$ denotes the complex conjugate of $D$.
\end{prop}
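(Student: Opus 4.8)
The plan is to derive both identities directly from the Volterra integral representation \eqref{14}, exploiting the fact that a Volterra equation of this type has a unique solution (its Neumann series converges absolutely). For each symmetry I will show that the candidate right-hand side solves the \emph{same} integral equation as $\mu_{\pm}(x,t;\lambda)$ --- with the same inhomogeneous term $\mathbb{I}$, the same integration endpoint $\pm\infty$, and the same kernel --- so that equality follows from uniqueness. Three structural facts drive the whole argument: (i) the potential $U^{\infty}$ in \eqref{8} is real and $\lambda$-independent, hence $\overline{U^{\infty}}=U^{\infty}$; (ii) the phase $p$ in \eqref{10} depends on $\lambda$ only through $\lambda^{2}$ while its remaining ingredients $x$, $t$, $\sqrt{m}$ are real, so $\overline{p(x,t;\lambda)}=p(x,t;\overline{\lambda})=p(x,t;-\overline{\lambda})$; and (iii) $\sigma_{3}$ is real with $\sigma_{2}\sigma_{3}\sigma_{2}=-\sigma_{3}$.

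For the first relation $\mu_{\pm}(x,t;\lambda)=\overline{\mu_{\pm}}(x,t;-\overline{\lambda})$, I would write \eqref{14} at the spectral point $-\overline{\lambda}$ and take the complex conjugate of the entire equation. Using (ii), the conjugated phase factor $\overline{i(-\overline{\lambda})[p(x;-\overline{\lambda})-p(y;-\overline{\lambda})]}$ collapses to $i\lambda[p(x;\lambda)-p(y;\lambda)]$; by (iii) the conjugation passes through $e^{(\cdot)\hat{\sigma}_{3}}$ unchanged; and by (i) the kernel matrix is untouched. What remains is precisely \eqref{14} at the parameter $\lambda$ with $\overline{\mu_{\pm}(y;-\overline{\lambda})}$ playing the role of the unknown, so uniqueness forces $\overline{\mu_{\pm}(x;-\overline{\lambda})}=\mu_{\pm}(x;\lambda)$.

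The second relation $\mu_{\pm}(x,t;\lambda)=\sigma_{2}\overline{\mu_{\pm}}(x,t;\overline{\lambda})\sigma_{2}$ follows the same template but with an extra conjugation by $\sigma_{2}$. I would write \eqref{14} at $\overline{\lambda}$, conjugate it (turning $i\overline{\lambda}$ into $-i\lambda$ and, by (ii), $\overline{p(\cdot;\overline{\lambda})}$ into $p(\cdot;\lambda)$), and then sandwich the resulting identity between factors of $\sigma_{2}$. The decisive algebraic checks are $\sigma_{2}\mathbb{I}\sigma_{2}=\mathbb{I}$, $\sigma_{2}U^{\infty}\sigma_{2}=U^{\infty}$ (the off-diagonal, anti-symmetric, real form of $U^{\infty}$ is invariant under this conjugation, not merely preserved up to its symmetry class), and, via (iii), the conjugation of the exponential sends $\hat{\sigma}_{3}\mapsto-\hat{\sigma}_{3}$, flipping $-i\lambda[\cdots]\mapsto+i\lambda[\cdots]$ and exactly restoring the kernel of \eqref{14}. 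Hence $\sigma_{2}\overline{\mu_{\pm}(\overline{\lambda})}\sigma_{2}$ solves \eqref{14} at $\lambda$ and equals $\mu_{\pm}(\lambda)$ by uniqueness.

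The computations are routine; the only points demanding care are the sign bookkeeping in the exponential phase under each transformation and the verification $\sigma_{2}U^{\infty}\sigma_{2}=U^{\infty}$, where one must confirm that the precise entries of $U^{\infty}$ in \eqref{8} conspire to leave the matrix unchanged. The main conceptual obstacle, such as it is, lies in legitimately invoking uniqueness: I should note that the transformed function retains the correct normalization $\mathbb{I}$ as $x\to\pm\infty$ and that the integral runs from the identical endpoint $\pm\infty$, so that it is genuinely a solution of the \emph{same} Volterra problem \eqref{14} rather than of a different problem that merely shares the kernel.
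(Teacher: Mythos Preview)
Your proposal is correct and is the standard argument for this type of symmetry; the paper itself states Proposition~2.2 without proof, so there is no alternative approach in the text to compare against. The three structural ingredients you isolate --- reality and $\lambda$-independence of $U^{\infty}$, the fact that $p(x,t;\lambda)$ depends on $\lambda$ only through $\lambda^{2}$ with real coefficients, and the relation $\sigma_{2}\sigma_{3}\sigma_{2}=-\sigma_{3}$ --- together with uniqueness for the Volterra equation \eqref{14}, are exactly what is needed, and your sign bookkeeping (including the verification $\sigma_{2}U^{\infty}\sigma_{2}=U^{\infty}$ for the specific anti-symmetric real off-diagonal form in \eqref{8}) checks out.
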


\begin{prop}
 {\rm (Asymptotic property for $\lambda\rightarrow\infty$)} The following asymptotic  behavior comes into existence for the eigenfunctions $\mu_{\pm}(x, t; \lambda)$
\begin{align}\label{16}
\mu_{\pm}(x, t; \lambda)=\mathbb{I}+O(\frac{1}{\lambda}), \quad \lambda\rightarrow\infty.
\end{align}
\end{prop}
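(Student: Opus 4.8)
The plan is to read the asymptotics off the Volterra representation \eqref{14}, which I first show determines bounded solutions uniformly in $\lambda$, and then sharpen to the rate $O(1/\lambda)$ by exploiting the oscillatory structure of the kernel. First I would establish that the Neumann series for \eqref{14} converges and is bounded uniformly in $\lambda$ on the relevant half-plane. Since $U^{\infty}$ is purely off-diagonal with entries $\pm q_{xx}/(2m)$ that are integrable and regular under the $H^{1,1}$ hypothesis, and since the kernel $e^{i\lambda(p(x,t;\lambda)-p(y,t;\lambda))\hat{\sigma}_{3}}$ is bounded by $1$ on the domain where the relevant column of $\mu_{\pm}$ is analytic (cf. the analyticity property established above), the standard Volterra estimate gives $\|\mu_{\pm}(\cdot,t;\lambda)-\mathbb{I}\|_{L^{\infty}_{x}}\le \exp(\|U^{\infty}\|_{L^{1}})-1$. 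This is the routine part.

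The crux is upgrading this bound to $O(1/\lambda)$. The key observation is that the phase is \emph{exactly linear in} $\lambda$: from \eqref{10}--\eqref{11}, $p(x,t;\lambda)-p(y,t;\lambda)=\int_{y}^{x}\sqrt{m(s,t)}\,\mathrm{d}s$ is independent of $\lambda$, so $\partial_{y}e^{\pm 2i\lambda(p(x)-p(y))}=\mp 2i\lambda\sqrt{m(y)}\,e^{\pm 2i\lambda(p(x)-p(y))}$. Writing $\mu_{\pm}=\mu_{\pm}^{\mathrm{diag}}+\mu_{\pm}^{\mathrm{off}}$, the conjugation $e^{i\lambda(p(x)-p(y))\hat{\sigma}_{3}}$ leaves diagonal entries unscaled and multiplies off-diagonal entries by $e^{\pm 2i\lambda(p(x)-p(y))}$; because $U^{\infty}$ is off-diagonal, the off-diagonal part of $\mu_{\pm}-\mathbb{I}$ is governed by the oscillatory integrals $\int_{\pm\infty}^{x}e^{\pm 2i\lambda(p(x)-p(y))}(U^{\infty}\mu_{\pm})^{\mathrm{off}}\,\mathrm{d}y$, whereas the diagonal part is governed by the \emph{non-oscillatory} integral $\int_{\pm\infty}^{x}(U^{\infty}\mu_{\pm})^{\mathrm{diag}}\,\mathrm{d}y$. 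Integrating the oscillatory integral by parts once and noting that the boundary term at $y=x$ carries the prefactor $1/\lambda$ while the term at $y=\pm\infty$ vanishes since $U^{\infty}\to 0$, I obtain $\mu_{\pm}^{\mathrm{off}}=O(1/\lambda)$. Feeding this back, the diagonal integral is an $L^{1}$ amplitude against the $O(1/\lambda)$ off-diagonal entries, so $\mu_{\pm}^{\mathrm{diag}}-\mathbb{I}=O(1/\lambda)$ as well, which yields \eqref{16}.

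The main obstacle is to verify that the single integration by parts does not reintroduce a factor of $\lambda$ through $\partial_{y}\mu_{\pm}$. Here I would invoke the structure of \eqref{13}: the only $\lambda$-proportional term in $\mu_{\pm,y}$ is the commutator $i\lambda p_{x}[\sigma_{3},\mu_{\pm}]$, which is purely off-diagonal and annihilates the diagonal entries, $[\sigma_{3},\mu_{\pm}]^{\mathrm{diag}}=0$. Consequently the diagonal entries of $\mu_{\pm}$, which are precisely the amplitudes appearing in the off-diagonal oscillatory integral, have $y$-derivatives free of any explicit $\lambda$, so after one integration by parts the amplitude stays uniformly bounded in $\lambda$ (using $\|U^{\infty}\|_{L^{1}}$, $\|(U^{\infty})'\|_{L^{1}}$ from the $H^{1,1}$ regularity together with the boundedness of $\mu_{\pm}$), and the gained $1/\lambda$ survives. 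An equivalent, more transparent route is to substitute the formal series $\mu_{\pm}=\mathbb{I}+\lambda^{-1}\mu_{\pm}^{(1)}+\lambda^{-2}\mu_{\pm}^{(2)}+\cdots$ into \eqref{13}: the $O(\lambda)$ balance is automatic since $[\sigma_{3},\mathbb{I}]=0$, the $O(1)$ balance $i p_{x}[\sigma_{3},\mu_{\pm}^{(1)}]=-U^{\infty}$ fixes the off-diagonal part of $\mu_{\pm}^{(1)}$ algebraically, and higher orders follow recursively; the Volterra bound above is what certifies this expansion is genuinely asymptotic rather than merely formal.
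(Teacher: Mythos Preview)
The paper states this proposition without proof; it is recorded as a standard consequence of the Volterra representation \eqref{14} and the transformed Lax pair \eqref{13}, so there is no argument in the text to compare against. Your proposal supplies precisely the standard proof: uniform Volterra bounds, then either one integration by parts in the oscillatory kernel or the equivalent WKB substitution $\mu_{\pm}=\mathbb{I}+\lambda^{-1}\mu_{\pm}^{(1)}+\cdots$ into \eqref{13}. Your identification of the structural facts that make this work---that $p(x)-p(y)=\int_{y}^{x}\sqrt{m}\,\mathrm{d}s$ is $\lambda$-independent, that $U^{\infty}$ is purely off-diagonal, and that the $\lambda$-proportional part of $\partial_{y}\mu_{\pm}$ lies in the range of $[\sigma_{3},\cdot]$ and hence does not enter the diagonal amplitude---is correct and is exactly what drives the argument.

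One minor caveat: the integration-by-parts route, taken literally, asks for $(U^{\infty})'\in L^{1}$, which involves $q_{xxx}$ and is slightly more than $q_{0}\in H^{1,1}$ alone provides. The paper does not address this either. In practice one either works first with smoother data and passes to the limit, or relies on the formal-series route you sketch at the end, where the $O(1)$ balance $ip_{x}[\sigma_{3},\mu_{\pm}^{(1)}]=-U^{\infty}$ determines the off-diagonal of $\mu_{\pm}^{(1)}$ algebraically from $U^{\infty}$ alone and the Volterra bound certifies the expansion is asymptotic; this avoids the extra derivative.
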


\subsection{The eigenfunctions near $\lambda=0$}
For the EB equation, we also need to
investigate the eigenfunctions near $\lambda=0$. Since the initial value $q_{0}\in H^{1,1}(\mathbb{R})$,  as $x\rightarrow\pm\infty$, the eigenfunctions $\Psi(x, t; \lambda)$ satisfies the asymptotic behavior
\begin{align}\label{17}
\Psi(x, t; \lambda)\sim e^{i(\lambda x-4\lambda^{3}t)\sigma_{3}},\ x\rightarrow \pm\infty.
\end{align}
Through making a gauge transformation
\begin{align}\label{18}
\mu^{0}(x, t; \lambda)=\Psi(x, t; \lambda)e^{-i(\lambda x-4\lambda^{3}t)\sigma_{3}},
\end{align}
we get Jost solutions $\mu^{0}(x, t; \lambda)$, which tends to $\mathbb{I}$ as $x\rightarrow \pm\infty$. Then, we obtain the new Lax
pair for $\mu^{0}(x, t; \lambda)$, given by
\begin{align}\label{19}
&\mu^{0}_{x}(x, t; \lambda)=i\lambda\left[\sigma_{3},\mu^{0}(x, t; \lambda)\right]+U^{0}\mu^{0}(x, t; \lambda),\notag\\
&\mu^{0}_{t}(x, t; \lambda)=-4i\lambda^{3}\left[\sigma_{3},\mu^{0}(x, t; \lambda)\right]+V^{0}\mu^{0}(x, t; \lambda),
\end{align}
where $U^{0}=i\lambda q_{x}\sigma_{1}, V^{0}=V+4i\lambda^{3}\sigma_{3}$. Then, the Eq. \eqref{19} can be solved by the Volterra
integrals
\begin{align}\label{20}
\mu^{0}_{\pm}(x, t; \lambda)=\mathbb{I}+\int_{\pm \infty}^{x}e^{i\lambda\left(x-y\right)\hat{\sigma}_{3}}U^{0}(y, t; \lambda)\mu^{0}_{\pm}(y, t; \lambda)\mathrm{d}y,
\end{align}
which implies the analytical properties for $\mu_{\pm}^{0}(x, t; \lambda)$.
\begin{prop}
 {\rm (Analytic property)} Let $q(x)-q_{0}\in H^{1,1}(\mathbb{R})$, we find
$\mu^{0}_{+,1}, \mu^{0}_{-,2}$ are analytic in $\mathbb{C}_{+}$ and $\mu^{0}_{-,1}, \mu^{0}_{+,2}$ are analytic in $\mathbb{C}_{-}$.
\end{prop}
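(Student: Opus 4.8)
The plan is to read the analyticity directly off the Volterra representation \eqref{20}, in exactly the same way as in the analysis near $\lambda=\infty$ that produced the preceding analyticity Proposition; the only new feature is that the phase is now the elementary $e^{i\lambda(x-y)\hat\sigma_3}$ rather than one built from $p(x,t;\lambda)$. First I would make the exponential structure explicit. Since $e^{i\lambda(x-y)\hat\sigma_3}D=e^{i\lambda(x-y)\sigma_3}De^{-i\lambda(x-y)\sigma_3}$ and $U^0=i\lambda q_x\sigma_1$ is off-diagonal, the kernel of \eqref{20} is
\begin{align*}
\mathcal K(x,y;\lambda)=e^{i\lambda(x-y)\hat\sigma_3}U^0(y)=\begin{pmatrix}0 & i\lambda q_x(y)e^{2i\lambda(x-y)}\\ i\lambda q_x(y)e^{-2i\lambda(x-y)} & 0\end{pmatrix}.
\end{align*}
Writing \eqref{20} column by column, the first column $\mu^0_{\pm,1}=(\mu^0_{\pm,11},\mu^0_{\pm,21})^{T}$ obeys
\begin{align*}
\mu^0_{\pm,11}(x)&=1+\int_{\pm\infty}^{x}i\lambda q_x(y)e^{2i\lambda(x-y)}\mu^0_{\pm,21}(y)\,dy,\\
\mu^0_{\pm,21}(x)&=\int_{\pm\infty}^{x}i\lambda q_x(y)e^{-2i\lambda(x-y)}\mu^0_{\pm,11}(y)\,dy,
\end{align*}
and symmetrically for the second column with the roles of the two exponentials interchanged.

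Next I would fix the half-plane by the sign of $x-y$ on the domain of integration. For $\mu^0_+$ the integral runs over $y\ge x$, so $x-y\le 0$; for $\mu^0_-$ it runs over $y\le x$, so $x-y\ge 0$. Because $|e^{\mp 2i\lambda(x-y)}|=e^{\pm 2\,\mathrm{Im}\,\lambda\,(x-y)}$, the exponential that drives the first column of $\mu^0_+$ (namely $e^{-2i\lambda(x-y)}$ with $x-y\le 0$, appearing in the leading iterate $\mu^0_{+,21}$) is bounded precisely when $\mathrm{Im}\,\lambda\ge 0$, placing $\mu^0_{+,1}$ in $\mathbb{C}_+$. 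Running the same bookkeeping through the other three cases gives $\mu^0_{-,2}$ in $\mathbb{C}_+$ and $\mu^0_{-,1},\mu^0_{+,2}$ in $\mathbb{C}_-$, matching the claim.

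Finally I would justify analyticity through the Neumann series $\mu^0_\pm=\sum_{n\ge 0}\mathcal T^{\,n}\mathbb I$ associated with the Volterra operator $\mathcal T$ of \eqref{20}. Each iterate is an integral over the ordered domain cut out by the limits $\pm\infty$ and $x$, whose integrand is entire in $\lambda$; on the relevant half-plane it is dominated by $|\lambda|^{n}\prod_i|q_x(y_i)|$ once the controlled exponentials are handled, and since $q_x\in L^{1}(\mathbb R)$ under the hypothesis $q_0\in H^{1,1}(\mathbb R)$, the volume of the ordered simplex yields the factor $\|q_x\|_{L^1}^{\,n}/n!$. This forces absolute and uniform convergence on compact subsets of the half-plane, and a uniform limit of analytic functions being analytic (Morera/Weierstrass) then gives the stated analyticity.

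The step I expect to be the main obstacle is precisely the domination in the last paragraph: in the coupled system one of the two exponentials is always unfavourable on the chosen half-plane, so the naive term-by-term bound by $1$ is not available. The honest way to close it is to keep the two components together and exploit the alternating sign structure of the phases along the integration chain, so that the favourable exponentials at the intermediate nodes help control the single unfavourable factor; combined with the $L^{1}$ bound on $q_x$ and the factorial gain from the ordered domain, this produces the uniform estimate. This is the same mechanism already invoked for the eigenfunctions near $\lambda=\infty$, so the argument parallels the preceding Proposition and I would present it by reduction to that case.
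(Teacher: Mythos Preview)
Your overall strategy---read off analyticity from the Volterra representation \eqref{20} via a Neumann series---is exactly what the paper intends (the paper gives no proof, merely remarking that \eqref{20} ``implies the analytical properties''). But your column equations contain an error, and that error is what manufactures the ``obstacle'' you spend the last paragraph worrying about.

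The operator $e^{i\lambda(x-y)\hat\sigma_{3}}$ in \eqref{20} is conjugation on the full product $U^{0}(y)\mu^{0}_{\pm}(y)$, not left action on $U^{0}$ alone. Conjugation by $e^{A\sigma_{3}}$ leaves diagonal entries unchanged and multiplies the $(2,1)$ and $(1,2)$ entries by $e^{-2A}$ and $e^{2A}$ respectively. Since $[U^{0}\mu^{0}_{\pm}]_{11}=i\lambda q_{x}\,\mu^{0}_{\pm,21}$ sits on the diagonal of the product, it carries \emph{no} exponential factor. The correct first-column system is
\begin{align*}
\mu^{0}_{\pm,11}(x)&=1+\int_{\pm\infty}^{x} i\lambda q_{x}(y)\,\mu^{0}_{\pm,21}(y)\,dy,\\
\mu^{0}_{\pm,21}(x)&=\int_{\pm\infty}^{x} i\lambda q_{x}(y)\,e^{-2i\lambda(x-y)}\,\mu^{0}_{\pm,11}(y)\,dy,
\end{align*}
with only the single exponential $e^{-2i\lambda(x-y)}$ present; your equation for $\mu^{0}_{\pm,11}$ wrongly inserts $e^{2i\lambda(x-y)}$, because you treated \eqref{20} as $\mu^{0}=\mathbb{I}+\int\mathcal{K}\mu^{0}$ with $\mathcal{K}=e^{i\lambda(x-y)\hat\sigma_{3}}U^{0}$ rather than $\mu^{0}=\mathbb{I}+\int e^{i\lambda(x-y)\hat\sigma_{3}}(U^{0}\mu^{0})$.

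With the corrected system the argument is immediate and your final paragraph is unnecessary: for $\mu^{0}_{+}$ the integration has $x-y\le 0$, so $|e^{-2i\lambda(x-y)}|\le 1$ exactly when $\mathrm{Im}\,\lambda\ge 0$, and every Neumann iterate is bounded by $(|\lambda|\|q_{x}\|_{L^{1}})^{n}/n!$ with no alternating-phase mechanism needed. The second column carries only $e^{2i\lambda(x-y)}$, selecting the opposite half-plane, and the four cases follow. (A minor secondary point: the hypothesis $q_{0}\in H^{1,1}$ as defined in \eqref{1.1} yields $q_{x}\in L^{2}$, not $L^{1}$, so the $L^{1}$ bound you invoke needs a word of justification; the paper is silent on this as well.)
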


\begin{prop}
{\rm (Asymptotic property for $\lambda\rightarrow 0$)} As $\lambda\rightarrow 0$, the asymptotic property of $\mu_{\pm}^{0}(x, t; \lambda)$ arrives
\begin{align}\label{21}
\mu^{0}_{\pm}(x, t; \lambda)=\mathbb{I}+iq(x,t)\sigma_{1}\lambda+O(\lambda^{2}),\ \lambda\rightarrow 0.
\end{align}
\end{prop}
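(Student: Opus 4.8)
The plan is to read the small-$\lambda$ expansion of $\mu^{0}_{\pm}$ directly off the Volterra equation \eqref{20}, exploiting the crucial fact that the potential $U^{0}=i\lambda q_{x}\sigma_{1}$ vanishes linearly as $\lambda\to 0$. I would set up the Neumann (successive-approximation) series $\mu^{0}_{\pm}=\sum_{n\ge 0}\mu^{(n)}_{\pm}$ with $\mu^{(0)}_{\pm}=\mathbb{I}$ and
\begin{align}
\mu^{(n)}_{\pm}(x,t;\lambda)=\int_{\pm\infty}^{x}e^{i\lambda(x-y)\hat{\sigma}_{3}}\,U^{0}(y,t;\lambda)\,\mu^{(n-1)}_{\pm}(y,t;\lambda)\,\mathrm{d}y .\notag
\end{align}
Since each application of the integral operator carries exactly one factor of $U^{0}=O(\lambda)$, we have $\mu^{(n)}_{\pm}=O(\lambda^{n})$, so the identity $\mathbb{I}$ supplies the order-$1$ term and only $\mu^{(1)}_{\pm}$ can contribute at order $\lambda$.

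Next I would compute $\mu^{(1)}_{\pm}$ explicitly. Because $U^{0}$ is purely off-diagonal, conjugation by the exponential gives
\begin{align}
e^{i\lambda(x-y)\hat{\sigma}_{3}}\sigma_{1}=\begin{pmatrix}0 & e^{2i\lambda(x-y)}\\ e^{-2i\lambda(x-y)} & 0\end{pmatrix},\notag
\end{align}
so that
\begin{align}
\mu^{(1)}_{\pm}(x,t;\lambda)=i\lambda\int_{\pm\infty}^{x}q_{y}(y,t)\begin{pmatrix}0 & e^{2i\lambda(x-y)}\\ e^{-2i\lambda(x-y)} & 0\end{pmatrix}\mathrm{d}y .\notag
\end{align}
Replacing $e^{\pm 2i\lambda(x-y)}$ by $1$ (its value at $\lambda=0$) and using the fundamental theorem of calculus together with the decay $q(\pm\infty,t)=0$ — a consequence of $q\in H^{1,1}(\mathbb{R})$, which forces $q$ to be continuous and to vanish at both ends — the leading contribution is
\begin{align}
i\lambda\,\sigma_{1}\int_{\pm\infty}^{x}q_{y}(y,t)\,\mathrm{d}y=i\lambda\,q(x,t)\sigma_{1},\notag
\end{align}
the same value for both signs of the lower limit. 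This reproduces exactly the linear term in \eqref{21}.

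It then remains to check that what has been discarded is genuinely $O(\lambda^{2})$. There are two sources: (i) the correction from expanding the oscillatory factors, controlled by $|e^{\pm 2i\lambda(x-y)}-1|\le 2|\lambda|\,|x-y|$, which bounds that remainder by $C\lambda^{2}\int_{\mathbb{R}}(1+|y|)|q_{y}(y,t)|\,\mathrm{d}y$; and (ii) the Neumann tail $\sum_{n\ge 2}\mu^{(n)}_{\pm}$, which by the standard Volterra estimate (the exponential factors are unimodular on $\mathbb{R}$) obeys $\|\mu^{(n)}_{\pm}\|\le (|\lambda|\,\|q_{x}\|_{L^{1}})^{n}/n!$ and is therefore $O(\lambda^{2})$. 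Assembling the order-$1$, order-$\lambda$, and $O(\lambda^{2})$ pieces gives \eqref{21}.

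The main obstacle is the uniformity and finiteness of the error bounds rather than the leading computation: one must guarantee that $\int_{\mathbb{R}}|q_{x}|\,\mathrm{d}y$ and the first weighted moment $\int_{\mathbb{R}}(1+|y|)|q_{x}|\,\mathrm{d}y$ are finite and that the implied constants are (locally) independent of $(x,t)$. Plain $L^{2}$ control of $q_{x}$ does not suffice for the weighted moment; it is precisely the $L^{2,1}\cap H^{1}$ structure of the initial data that must be invoked to furnish the required integrability, so the careful bookkeeping of which weighted norm bounds which remainder is where the real work lies.
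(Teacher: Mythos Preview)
Your Neumann-series argument from the Volterra equation \eqref{20} is correct and is precisely the standard route to this expansion: the paper itself states the proposition without proof, so there is nothing to compare against, and what you have written is exactly the argument one supplies. Your computation of the order-$\lambda$ term via $\int_{\pm\infty}^{x}q_{y}\,\mathrm{d}y=q(x,t)$ and your honest flagging of the weighted-integrability bookkeeping for the $O(\lambda^{2})$ remainder are both on point.
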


\subsection{The Connection Between $\mu_{\pm}(x, t; \lambda)$ and $\mu^{0}_{\pm}(x, t; \lambda)$, Scattering matrix}
In order to reconstruct the solution $q(x, t)$ from  the basis  RH problem, the connection
between $\mu_{\pm}(x, t; \lambda)$ and $\mu^{0}_{\pm}(x, t; \lambda)$  need to be established. In fact,
according to \eqref{5} and \eqref{18}, the $G^{-1}(x, t)\mu_{\pm}(x, t; \lambda) e^{i\lambda p\sigma_{3}}$ and $\mu^{0}_{\pm}(x, t; \lambda)e^{i(\lambda x-4\lambda^{3}t)\sigma_{3}}$ have following linear relation
\begin{align}\label{22}
\mu_{\pm}(x, t; \lambda)=G(x, t)\mu^{0}_{\pm}(x, t; \lambda)e^{i(\lambda x-4\lambda^{3}t)\sigma_{3}}C_{\pm}(\lambda)e^{-i\lambda p\sigma_{3}},
\end{align}
where $C_{\pm}(\lambda)$ is a matrix-valued function  with $C_{+}=\mathbb{I}, C_{-}=e^{-i\lambda c\sigma_{3}}, c=\int_{-\infty}^{+\infty}(\sqrt{m(s,t)}-1)\mathrm{d}s=c_{+}+c_{-}, c_{+}=\int_{x}^{+\infty}(\sqrt{m(s,t)}-1)\mathrm{d}s, c_{-}=\int_{-\infty}^{x}(\sqrt{m(s,t)}-1)\mathrm{d}s$. As a result, one has
\begin{align}\label{23}
\mu_{\pm}(x, t; \lambda)=G(x, t)\mu^{0}_{\pm}(x, t; \lambda)e^{i\lambda \int_{x}^{\pm\infty}(\sqrt{m(s,t)}-1)\mathrm{d}s \sigma_{3}}.
\end{align}
Obviously, the eigenfunctions $\mu_{+}(x, t; \lambda)$ and $\mu_{-}(x, t; \lambda)$ have following linear relation by a scattering matrix  $S(\lambda)$
\begin{align}\label{24}
\mu_{-}(x, t; \lambda)=\mu_{+}(x, t; \lambda)e^{i\lambda p\hat{\sigma_{3}}}S(\lambda),
\end{align}
where the matrix $S(\lambda)$ has the form
\begin{align}\label{25}
S(\lambda)=\left(\begin{array}{cc}
    \overline{a(\bar{\lambda})}  &  b(\lambda)\\
    -\overline{b(\bar{\lambda})}  &  a(\lambda)\\
\end{array}\right),
\end{align}
of which $a(\lambda)$ can be computed by
\begin{align}\label{26}
a(\lambda)= \det(\mu_{+1},\mu_{-2}),
\end{align}
which implies that
$a(\lambda)$ is analytic in $\mathbb{C}_{+}$.

\begin{assum}\label{Ass1}
In this work, we assume that the initial value $q_{0}(x)$ is selected so that $a(\lambda)$ is non-zero, and in general we can suppose that $q_{0}(x)$ has a small norm.
\end{assum}

\begin{prop}
 {\rm (Asymptotic property for $\lambda\rightarrow\infty$)} The asymptotic behavior for the scattering data  $a(\lambda)$ is
\begin{align}\label{27}
a(\lambda)=1+O(\frac{1}{\lambda}), \quad \lambda\rightarrow\infty.
\end{align}
\end{prop}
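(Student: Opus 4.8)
The plan is to obtain the expansion directly from the determinant representation \eqref{26} of the scattering coefficient, feeding in the large-$\lambda$ behaviour of the eigenfunctions recorded in \eqref{16}. Because $a(\lambda)=\det(\mu_{+1},\mu_{-2})$ is assembled from the first column of $\mu_+$ and the second column of $\mu_-$, and because the analytic property already established makes both $\mu_{+,1}$ and $\mu_{-,2}$ analytic in $\mathbb{C}_+$, the coefficient $a(\lambda)$ is itself analytic in $\mathbb{C}_+$; the asymptotic claim is therefore to be read as $\lambda\to\infty$ inside $\mathbb{C}_+$, with the boundary values on $\mathbb{R}$ recovered by continuity.

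First I would split the eigenfunction asymptotics \eqref{16} into columns, namely $\mu_{+,1}=(1,0)^{T}+O(\lambda^{-1})$ and $\mu_{-,2}=(0,1)^{T}+O(\lambda^{-1})$, and substitute them into \eqref{26}. Expanding the $2\times2$ determinant then gives
\begin{align}
a(\lambda)=\det\begin{pmatrix} 1+O(\lambda^{-1}) & O(\lambda^{-1})\\ O(\lambda^{-1}) & 1+O(\lambda^{-1})\end{pmatrix}=\big(1+O(\lambda^{-1})\big)^{2}-O(\lambda^{-2})=1+O(\lambda^{-1}),
\end{align}
which is exactly the asserted expansion \eqref{27}. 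The determinantal computation itself is immediate once \eqref{16} is in hand, the product of two off-diagonal error terms contributing only at order $\lambda^{-2}$.

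The only point requiring genuine care—and the step I expect to carry the real weight—is the legitimacy and uniformity of the $O(\lambda^{-1})$ bounds underlying \eqref{16}, since it is this uniformity that justifies multiplying the entrywise errors and still landing in $O(\lambda^{-1})$. Should a self-contained argument be preferred over quoting \eqref{16}, I would instead work from the Volterra equation \eqref{14}: a single integration by parts in the oscillatory kernel $e^{i\lambda[p(x,t;\lambda)-p(y,t;\lambda)]\hat\sigma_{3}}$, combined with the fact that $U^{\infty}$ is built from $q_x\in H^{1,1}(\mathbb{R})$, produces the $\lambda^{-1}$ decay of the first Neumann iterate and bounds the remaining tail, with constants uniform on sectors of $\mathbb{C}_+$. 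The weighted-Sobolev regularity of the data is precisely what guarantees convergence of this Neumann series and the uniformity of its remainder, so the substantive obstacle lies in the analytic estimate of that tail rather than in the algebra of the determinant.
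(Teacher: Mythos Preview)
The paper does not supply a proof of this proposition; it is stated and then immediately used. Your argument---substituting the columnwise asymptotics \eqref{16} into the determinant formula \eqref{26}---is the standard derivation and is correct. Your additional remarks on uniformity and on how one would justify \eqref{16} from the Volterra equation \eqref{14} are accurate and in fact go beyond what the paper provides.
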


\subsection{The basic Riemann-Hilbert problem}
According to the property of the Jost solutions and the scattering data, the sectionally meromorphic matrices can be defined as
\begin{align}\label{28}
\tilde{M}(x, t; \lambda)=\begin{cases}
\tilde{M}_{+}(x, t; \lambda)=\left(\mu_{+1}, \frac{\mu_{-2}}{a(\lambda)}\right),\quad \lambda\in \mathbb{C}_{+}\\
\tilde{M}_{-}(x, t; \lambda)=\left(\frac{\mu_{-1}}{\overline{a(\bar{\lambda})}}, \mu_{+2}\right),\quad \lambda\in \mathbb{C}_{-},
\end{cases}
\end{align}
where $\tilde{M}_{\pm}(x, t; \lambda)=\underset{\epsilon\rightarrow 0}{\lim}\tilde{M}_{\pm}(x, t; \lambda\pm i\epsilon), \lambda\in \mathbb{R}$.
Then combining the Assumption \ref{Ass1}, the above matrix function
$\tilde{M}(x, t; \lambda)$ solves the following matrix RH problem.

\begin{rhp}\label{rhp1}
Find an analysis function $\tilde{M}(x, t; \lambda)$ which satisfies:\\

 $\bullet$ $\tilde{M}(x, t; \lambda)$ is meromorphic in $\mathbb{C}\setminus \mathbb{R}$;\\

 $\bullet$ $\tilde{M}_{+}(x, t; \lambda)=\tilde{M}_{-}(x, t; \lambda)\tilde{J}(x, t; \lambda), \lambda\in \mathbb{R}$, where
\begin{align}\label{29}
\tilde{J}(x, t; \lambda)=\left(\begin{array}{cc}
    1  &  r(\lambda)e^{2i\lambda p}\\
    \overline{r(\lambda)}e^{-2i\lambda p}  &  1+\left\vert r(\lambda) \right\vert^{2}\\
\end{array}\right),
\end{align}

 with $r(\lambda)=\frac{b(\lambda)}{a(\lambda)}$;

$\bullet$ $\tilde{M}(x, t; \lambda)=\mathbb{I}+O(\frac{1}{\lambda})$ as $\lambda\rightarrow\infty$;\\

$\bullet$ $\tilde{M}(x, t; \lambda)=G(x,t)\left[\mathbb{I}+(ic_{+}\sigma_{3}+iq(x,t)\sigma_{1})\lambda+O(\lambda^{2})\right]$ as $\lambda\rightarrow 0$ .
\end{rhp}

\begin{rem}\label{rem1}
 According to the results of the Zhou's vanishing lemma and
Liouville's theorem, we know that the solution of RH problem \ref{rhp1} exists and is unique.
\end{rem}

Next, Our goal is to reconstruct the solution
$q(x, t)$. Following the idea raised by Boutet de Monvel and Shepelsky \cite{Li-WKI57,Li-WKI58}, we introduce a new scale
\begin{align}\label{30}
y(x,t)=x-c_{+}(x,t).
\end{align}
In this scale, the original RH problem $\tilde{M}(x, t; \lambda)$ can be
further defined into
\begin{align}\label{31}
\tilde{M}(x, t; \lambda)=M(y(x, t), t; \lambda),
\end{align}
and the new function $M(x, t; \lambda)$ satisfies the following matrix RH problem:

\begin{rhp}\label{rhp2}
Find an analysis function $M(y, t; \lambda)$ which satisfies:\\

 $\bullet$ $M(y, t; \lambda)$ is meromorphic in $\mathbb{C}\setminus \mathbb{R}$;\\

 $\bullet$ $M_{+}(y, t; \lambda)=M_{-}(y, t; \lambda)J(y, t; \lambda), \lambda\in \mathbb{R}$, where
\begin{align}\label{32}
J(y, t; \lambda)=\left(\begin{array}{cc}
    1  &  r(\lambda)e^{-2it\theta}\\
    \overline{r(\lambda)}e^{2it\theta}  &  1+\left\vert r(\lambda) \right\vert^{2}\\
\end{array}\right),
\end{align}

 with $\theta(y,t; \lambda)=4\lambda^{3}-\frac{y}{t}\lambda$;

$\bullet$ $M(y, t; \lambda)=\mathbb{I}+O(\frac{1}{\lambda})$ as $\lambda\rightarrow\infty$.
\end{rhp}

\begin{prop}
Based on the Remark \ref{rem1}, we know that the RH problem \ref{rhp2}
admits a unique solution. Furthermore, the solution $q(x, t)$ of the Cauchy problem \eqref{1} can be
represented by the solution of RH problem \ref{rhp2} in parametric form, given by
\begin{align}\label{33}
&q(x, t)=q(y(x, t), t)=\lim_{\lambda\rightarrow 0}\frac{\left[M^{-1}(y, t; 0)M(y, t; \lambda)\right]_{12}}{i\lambda},\notag\\
&x(y,t)=y+\lim_{\lambda\rightarrow 0}\frac{\left[M^{-1}(y, t; 0)M(y, t; \lambda)\right]_{11}-1}{i\lambda}.
\end{align}
\end{prop}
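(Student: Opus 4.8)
The plan is to separate the two assertions: the unique solvability of RH problem~\ref{rhp2}, which refines Remark~\ref{rem1}, and the parametric formulas \eqref{33}, which carry the real content.

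For solvability I would note that the scale change \eqref{30}--\eqref{31} is a diffeomorphism in $x$, since $\partial_{x}y=1-\partial_{x}c_{+}=\sqrt{m}>0$ (recall $m=1+q_{x}^{2}\ge 1$); hence existence and uniqueness transfer directly from RH problem~\ref{rhp1}, already granted by Remark~\ref{rem1}. For completeness I would recall the mechanism behind the uniqueness: the jump \eqref{32} satisfies $\det J=1\cdot(1+|r|^{2})-re^{-2it\theta}\,\overline{r}e^{2it\theta}=1$, so $\det M$ has no jump, is entire, and equals $1$ at infinity, whence $\det M\equiv 1$ by Liouville and every solution is invertible; two solutions $M,\widehat{M}$ then give an entire quotient $M\widehat{M}^{-1}$ normalized to $\mathbb{I}$ at infinity, which Liouville forces to be $\mathbb{I}$.

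The substantive step is the reconstruction, and the key input is the behavior near $\lambda=0$ carried over from the last bullet of RH problem~\ref{rhp1} under the scaling, namely $M(y,t;\lambda)=G(x,t)\bigl[\mathbb{I}+(ic_{+}\sigma_{3}+iq\sigma_{1})\lambda+O(\lambda^{2})\bigr]$. Evaluating at $\lambda=0$ gives $M(y,t;0)=G(x,t)$, so left-multiplying by $M^{-1}(y,t;0)$ cancels the potential-dependent gauge factor $G$ exactly and yields
\[
M^{-1}(y,t;0)\,M(y,t;\lambda)=\mathbb{I}+(ic_{+}\sigma_{3}+iq\sigma_{1})\lambda+O(\lambda^{2}).
\]
Since $ic_{+}\sigma_{3}+iq\sigma_{1}=\begin{pmatrix} ic_{+} & iq \\ iq & -ic_{+}\end{pmatrix}$, the $(1,2)$-entry of this product is $iq\lambda+O(\lambda^{2})$ and the $(1,1)$-entry is $1+ic_{+}\lambda+O(\lambda^{2})$; dividing by $i\lambda$ and letting $\lambda\to 0$ recovers $q(y,t)$ and $c_{+}$, and $x=y+c_{+}$ then gives exactly \eqref{33}.

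The hard part will not be this algebra but justifying the $\lambda\to 0$ expansion on which it all rests, because $\lambda=0$ lies on the jump contour $\mathbb{R}$. I would trace that expansion back to the connection formula \eqref{23} and the small-$\lambda$ asymptotics \eqref{21} of $\mu^{0}_{\pm}$, verifying that $M$ admits a (non-tangential) analytic continuation near the origin with the stated coefficients; once this is secured, the cancellation of $G$ and the extraction of $q$ and $c_{+}$ are immediate.
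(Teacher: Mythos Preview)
The paper does not give an explicit proof of this proposition; it is stated and the text moves directly to Section~3, so there is nothing to compare against beyond the surrounding setup. Your argument is correct and supplies exactly the standard reasoning the paper leaves implicit: solvability is transferred from RH problem~\ref{rhp1} via the diffeomorphism $x\mapsto y$ (your computation $\partial_{x}y=\sqrt{m}>0$ is right), and the reconstruction formulas follow from the $\lambda\to 0$ expansion in the last bullet of RH problem~\ref{rhp1} together with $M(y,t;0)=G(x,t)$, which cancels the gauge and leaves $\mathbb{I}+(ic_{+}\sigma_{3}+iq\sigma_{1})\lambda+O(\lambda^{2})$.

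One minor point worth tightening: you flag the issue that $\lambda=0$ lies on the jump contour, and this is indeed the only place where care is needed. The resolution the paper implicitly relies on is that $r(0)=0$ (from $U^{0}=i\lambda q_{x}\sigma_{1}$ in \eqref{19}--\eqref{20}, so $b(0)=0$ while $a(0)=1$), hence $J(y,t;0)=\mathbb{I}$ and $M$ is continuous across $\mathbb{R}$ at the origin; combined with the expansion \eqref{21} and the connection \eqref{23}, this makes the limit in \eqref{33} well defined. You might state this explicitly rather than leave it as ``the hard part.''
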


\section{Asymptotics in oscillating region: $y>0, \left\vert \frac{y}{t}=O(1)\right\vert$}
In this section, we mainly discuss the long-time behavior of the solution of the Cauchy problem for the EB equation \eqref{1} in the solitonless
sector via the $\bar{\partial}$ descent method. We consider $y>0$ and the similarity region $\left\vert\frac{y}{t}\right\vert\leq N$, where $N$ is a
constant. In terms of $\frac{\mathrm{d}\theta}{\mathrm{d}\lambda}=0$, we can derive the two stationary points $\pm\lambda_{0}$, where
\begin{align}\label{34}
\lambda_{0}=\sqrt{\frac{y}{12t}},\quad \theta(\lambda)=4\lambda^{3}-12\lambda_{0}^{2}\lambda.
\end{align}
By calculating the real part of $2it\theta(\lambda), i.e., Re(2it\theta)=-24t\left((Re\lambda)^{2}-\frac{(Im\lambda)^{2}}{3}-\lambda_{0}^{2}\right)Im\lambda$, the exponential decaying domains of the oscillation term can be shown in Fig. \ref{F1}.\\
\centerline{\begin{tikzpicture}[scale=1.0]
\draw[-][thick](-3,0)--(-2,0);
\draw[-][thick](-2.0,0)--(-1.0,0)node[below]{$-\lambda_{0}$};
\draw[-][thick](-1,0)--(0,0);
\draw[-][thick](0,0)--(1,0)node[below]{$\lambda_{0}$};
\draw[-][thick](1,0)--(2,0);
\draw[fill] (1,0) circle [radius=0.035];
\draw[fill] (-1,0) circle [radius=0.035];
\draw[-][thick](2.0,0)--(3.0,0);
\draw [-,thick, cyan] (1,0) to [out=90,in=-120] (1.9,3);
\draw [-,thick, cyan] (1,0) to [out=-90,in=120] (1.9,-3);
\draw [-,thick, cyan] (-1,0) to [out=90,in=-60] (-1.9,3);
\draw [-,thick, cyan] (-1,0) to [out=-90,in=60] (-1.9,-3);
\draw[fill] (0,2) node{$e^{-2it\theta}\rightarrow 0$};
\draw[fill] (0,-2) node{$e^{2it\theta}\rightarrow 0$};
\draw[fill] (-2,1) node{$e^{2it\theta}\rightarrow 0$};
\draw[fill] (2,1) node{$e^{2it\theta}\rightarrow 0$};
\draw[fill] (2,-1) node{$e^{-2it\theta}\rightarrow 0$};
\draw[fill] (-2,-1) node{$e^{-2it\theta}\rightarrow 0$};
\end{tikzpicture}}\\
\begin{figure}[!htb]
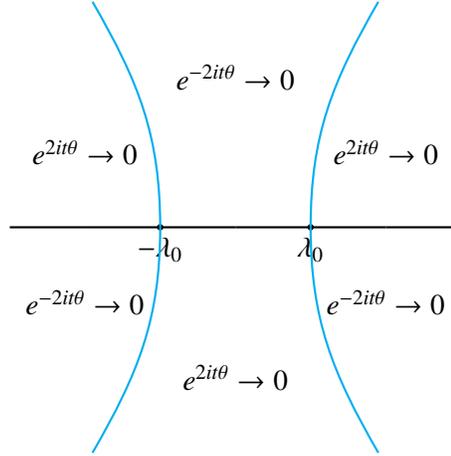

\centering
\caption{\footnotesize Exponential decaying domains.}
\label{F1}
\end{figure}

\subsection{The first deformation of the basic RH problem}
According to Fig. \ref{F1}, we need to decompose the jump
matrix $J(y, t, \lambda)$ into
\begin{align}\label{35}
J(y, t, \lambda)=
\begin{cases}
\left(\begin{array}{cc}
    1  &  0\\
    \overline{r(\lambda)}e^{2it\theta}  &  1\\
\end{array}\right)\left(\begin{array}{cc}
    1  &  r(\lambda)e^{-2it\theta}\\
    0  &  1\\
\end{array}\right),\ \left\vert\lambda\right\vert<\lambda_{0}, \\
\left(\begin{array}{cc}
    1  &  \frac{r(\lambda)}{1+\mid r(\lambda) \mid^{2}}e^{-2it\theta}\\
    0  &  1\\
\end{array}\right)\left(\begin{array}{cc}
    \frac{1}{1+\mid r(\lambda) \mid^{2}}  &  0\\
    0  &  1+\mid r(\lambda) \mid^{2}\\
\end{array}\right)\left(\begin{array}{cc}
    1  &  0\\
    \frac{\overline{r(\lambda)}}{1+\left\vert r(\lambda) \right\vert^{2}}e^{2it\theta}  &  1\\
\end{array}\right),\ \left\vert\lambda\right\vert>\lambda_{0},
\end{cases}
\end{align}
and  introduce a scalar function $\delta(\lambda)$, given by
\begin{align}\label{36}
\delta(\lambda)=\mbox{exp}\left[i\left(\int_{-\infty}^{-\lambda_{0}}+\int_{\lambda_{0}}^{+\infty}\right)\frac{\nu(s)}{s-\lambda}\mathrm{d}s\right],\
\nu(s)=-\frac{1}{2\pi}\ln(1+\left\vert r(s)\right\vert^{2}),
\end{align}
which satisfies the following Proposition.

\begin{prop}
 The function $\delta(\lambda)$ admits that:\\

 $\bullet$ $\delta(\lambda)$ is meromorphic in $(-\lambda_{0},\lambda_{0})$;\\

 $\bullet$ For $\lambda\in(-\infty,-\lambda_{0})\cup(\lambda_{0},+\infty)$, one has
\begin{align}\label{37} \delta_{+}(\lambda)=\delta_{-}(\lambda)(1+\left\vert r(\lambda) \right\vert^{2}),\ \lambda\in(-\infty,-\lambda_{0})\cup(\lambda_{0},+\infty);\end{align}

$\bullet$ As $\left\vert \lambda\right\vert\rightarrow \infty$ with $\left\vert arg(\lambda)\right\vert\leq c <\pi$, $\delta(\lambda)=1-\frac{i}{\lambda}\left(\int_{-\infty}^{-\lambda_{0}}+\int_{\lambda_{0}}^{+\infty}\right)\nu(s)\mathrm{d}s+O(\lambda^{-2})$;

$\bullet$ As $\lambda\rightarrow 0$, one has $\delta(\lambda)=1+\delta_{1}\lambda+O(\lambda^{2}),\delta_{1}=2i\int_{\lambda_{0}}^{+\infty}\frac{\nu(s)}{s^{2}}\mathrm{d}s$;

$\bullet$ As $\lambda\rightarrow \pm\lambda_{0}$ along ray $\lambda=\pm\lambda_{0}+e^{i\varphi}\mathbb{R}_{+}$ with $\left\vert \varphi\right\vert \leq c < \pi$, we have
\begin{align}\label{38} \left\vert\delta(\lambda)-\delta_{0}(\pm\lambda_{0})(\lambda\mp\lambda_{0})^{\mp i\nu(\pm\lambda_{0})}\right\vert\lesssim\left\vert\lambda\mp\lambda_{0}\right\vert^{\frac{1}{2}},\end{align}

where  \begin{align}\label{39} &\delta_{0}(\pm\lambda_{0})=\rm{exp}\{i\beta(\pm\lambda_{0},\pm\lambda_{0})\},\notag\\
&\beta(\lambda,\pm\lambda_{0})=\pm\nu(\pm\lambda_{0})\ln(\lambda\mp(\lambda_{0}+1))+\left(\int_{-\infty}^{-\lambda_{0}}
+\int_{\lambda_{0}}^{+\infty}\right)\frac{\nu(s)-\chi_{\pm}(s)\nu(\pm\lambda_{0})}{s-\lambda}\mathrm{d}s,\notag\\
&\chi_{+}(\lambda)=\begin{cases}1,\quad \lambda_{0}<\lambda<\lambda_{0}+1,\\
0,\quad \mbox{elsewhere}, \end{cases} \quad \chi_{-}(\lambda)=\begin{cases}1,\quad -\lambda_{0}-1<\lambda<-\lambda_{0},\\
0,\quad \mbox{elsewhere}. \end{cases}\end{align}
\end{prop}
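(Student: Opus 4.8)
The plan is to regard $\delta$ as the exponential of a Cauchy-type transform and to read off the first four properties from the standard Plemelj--Sokhotski machinery, reserving the genuine work for the endpoint analysis near $\pm\lambda_0$. Set $\Sigma=(-\infty,-\lambda_0]\cup[\lambda_0,+\infty)$ and $h(\lambda)=i\int_{\Sigma}\frac{\nu(s)}{s-\lambda}\,\mathrm{d}s$, so that $\delta=e^{h}$. Under Assumption \ref{Ass1} the reflection coefficient $r$ lies in $H^{1,1}(\mathbb{R})$, whence by the embedding $H^{1}(\mathbb{R})\hookrightarrow C^{0,1/2}(\mathbb{R})$ the function $\nu(s)=-\frac{1}{2\pi}\ln(1+|r(s)|^{2})$ is H\"older continuous of order $\tfrac12$, integrable, and decays as $|s|\to\infty$. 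The first point is then immediate: the kernel $\frac{\nu(s)}{s-\lambda}$ is holomorphic in $\lambda$ for every $\lambda\notin\Sigma$ and the integral converges locally uniformly, so $h$, and hence $\delta=e^{h}\neq0$, is analytic on $\mathbb{C}\setminus\Sigma$, in particular on the segment $(-\lambda_0,\lambda_0)$.

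For the jump relation I would apply the Sokhotski--Plemelj theorem to $h$. Writing $h(\lambda)=-2\pi\cdot\frac{1}{2\pi i}\int_{\Sigma}\frac{\nu(s)}{s-\lambda}\,\mathrm{d}s$, one gets $h_{+}(\lambda)-h_{-}(\lambda)=-2\pi\nu(\lambda)=\ln(1+|r(\lambda)|^{2})$ for $\lambda\in\Sigma$; exponentiating yields $\delta_{+}/\delta_{-}=1+|r|^{2}$, which is \eqref{37}. The two asymptotic expansions come from expanding the Cauchy kernel: $\frac{1}{s-\lambda}=-\sum_{k\ge0}s^{k}\lambda^{-k-1}$ as $\lambda\to\infty$ gives $h(\lambda)=-\frac{i}{\lambda}\int_{\Sigma}\nu\,\mathrm{d}s+O(\lambda^{-2})$ and hence the third point after $\delta=1+h+\cdots$, while $\frac{1}{s-\lambda}=\sum_{k\ge0}\lambda^{k}s^{-k-1}$ as $\lambda\to0$ gives $h(\lambda)=i\int_{\Sigma}\frac{\nu(s)}{s}\,\mathrm{d}s+i\lambda\int_{\Sigma}\frac{\nu(s)}{s^{2}}\,\mathrm{d}s+\cdots$. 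The vanishing of the order-$\lambda^{0}$ term and the reduction $\int_{\Sigma}\frac{\nu}{s^{2}}=2\int_{\lambda_0}^{\infty}\frac{\nu}{s^{2}}$, giving $\delta_{1}=2i\int_{\lambda_0}^{\infty}\frac{\nu(s)}{s^{2}}\,\mathrm{d}s$, both follow from the evenness $\nu(-s)=\nu(s)$, which is inherited from the symmetry $|r(-\lambda)|=|r(\lambda)|$ contained in \eqref{15}.

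The main work, and the main obstacle, is the local behavior near $\pm\lambda_0$, where the idea is to isolate the logarithmic singularity produced by the endpoint by subtracting the constant $\nu(\pm\lambda_0)$ against the compactly supported cutoff $\chi_{\pm}$. For the $+$ endpoint I would split $h(\lambda)=i\int_{\Sigma}\frac{\nu(s)-\chi_{+}(s)\nu(\lambda_0)}{s-\lambda}\,\mathrm{d}s+i\nu(\lambda_0)\int_{\lambda_0}^{\lambda_0+1}\frac{\mathrm{d}s}{s-\lambda}$; the second integral equals $i\nu(\lambda_0)[\ln(\lambda_0+1-\lambda)-\ln(\lambda_0-\lambda)]$, whose singular part exponentiates to exactly $(\lambda-\lambda_0)^{-i\nu(\lambda_0)}$ and whose regular part combines with the first integral, evaluated at $\lambda=\lambda_0$, to produce $\delta_{0}(\lambda_0)=e^{i\beta(\lambda_0,\lambda_0)}$ with $\beta$ as in \eqref{39}. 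It then remains to control the difference $h(\lambda)-i\beta(\lambda,\lambda_0)+i\nu(\lambda_0)\ln(\lambda-\lambda_0)$, i.e. the regularized Cauchy integral evaluated at $\lambda$ versus at $\lambda_0$.

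To estimate this I would write $\frac{1}{s-\lambda}-\frac{1}{s-\lambda_0}=\frac{\lambda-\lambda_0}{(s-\lambda)(s-\lambda_0)}$ and use that $g(s):=\nu(s)-\chi_{+}(s)\nu(\lambda_0)$ vanishes like $|s-\lambda_0|^{1/2}$ at the endpoint; this is precisely where the H\"older-$\tfrac12$ regularity of $\nu$ is essential. The rescaling $s-\lambda_0=|\lambda-\lambda_0|\,v$ turns the near-endpoint contribution into $|\lambda-\lambda_0|^{1/2}$ times a convergent $v$-integral (convergent at infinity because $g$ decays and at $v=1$ because $\lambda$ stays off $\Sigma$), so the whole difference is $O(|\lambda-\lambda_0|^{1/2})$; the contribution of the far ray $(-\infty,-\lambda_0]$ is smooth and $O(|\lambda-\lambda_0|)$. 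Exponentiating, and noting that both $\delta_{0}(\lambda_0)$ and $(\lambda-\lambda_0)^{-i\nu(\lambda_0)}$ are bounded along the admissible rays, yields $|\delta(\lambda)-\delta_{0}(\lambda_0)(\lambda-\lambda_0)^{-i\nu(\lambda_0)}|\lesssim|\lambda-\lambda_0|^{1/2}$, and the $-\lambda_0$ endpoint is handled identically with $\chi_{-}$ and the opposite sign in the exponent. The delicate point throughout is keeping the branch of the logarithm consistent so that the rays approach $\pm\lambda_0$ from the correct side and the principal-value singularity in the rescaled integral remains harmless.
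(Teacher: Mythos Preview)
The paper does not supply a proof of this proposition; it is stated as a standard result and the text proceeds directly to the conjugation \eqref{40}. Your argument is precisely the standard one underlying such statements in the Deift--Zhou and $\bar\partial$ literature, and it is correct in every part: write $\delta=e^{h}$ with $h$ a Cauchy transform of $\nu$; obtain the jump \eqref{37} from Plemelj; read off the large-$\lambda$ and small-$\lambda$ expansions from the geometric series for $(s-\lambda)^{-1}$, using the evenness $|r(-s)|=|r(s)|$ inherited from the symmetry \eqref{15} to kill the constant term at $\lambda=0$ and reduce $\delta_1$ to a one-sided integral; and near $\pm\lambda_0$ subtract $\chi_{\pm}\nu(\pm\lambda_0)$ to extract the explicit power $(\lambda\mp\lambda_0)^{\mp i\nu(\pm\lambda_0)}$, leaving a regularized Cauchy integral whose H\"older-$\tfrac12$ continuity (from $r\in H^{1}(\mathbb{R})\hookrightarrow C^{0,1/2}$) delivers the bound \eqref{38}. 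The branch caveat you flag at the end is the only point requiring care and is harmless: the discrepancy between your $\ln(\lambda_0+1-\lambda)$ and the paper's $\ln(\lambda-(\lambda_0+1))$ in \eqref{39} is an additive $\pm i\pi$, absorbed into the bounded constant $\delta_0(\pm\lambda_0)$ without affecting \eqref{38}.
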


Next, taking
\begin{align}\label{40} M^{(1)}(\lambda)=M(\lambda)\delta^{\sigma_{3}},\end{align}
we get a new matrix-value function $M^{(1)}(\lambda)$ which satisfies the following matrix RH problem.

\begin{rhp}\label{rhp3}
Find an analysis function $M^{(1)}(y, t; \lambda)$ which satisfies:\\

$\bullet$ $M^{(1)}(y, t; \lambda)$ is meromorphic in $\mathbb{C}\setminus \mathbb{R}$;\\

$\bullet$ $M_{+}^{(1)}(y, t; \lambda)=M^{(1)}_{-}(y, t; \lambda)J^{(1)}(y, t; \lambda), \lambda\in \mathbb{R}$, where
\begin{align}\label{41}
J^{(1)}(y, t; \lambda)=\begin{cases}
\left(\begin{array}{cc}
    1  &  0\\
    \overline{r(\lambda)}\delta^{2}e^{2it\theta}  &  1\\
\end{array}\right)\left(\begin{array}{cc}
    1  &  r(\lambda)\delta^{-2}e^{-2it\theta}\\
    0  &  1\\
\end{array}\right),\ \left\vert\lambda\right\vert<\lambda_{0}, \\
\left(\begin{array}{cc}
    1  &  \frac{r(\lambda)}{1+\mid r(\lambda) \mid^{2}}\delta_{-}^{-2}e^{-2it\theta}\\
    0  &  1\\
\end{array}\right)\left(\begin{array}{cc}
    1  &  0\\
    \frac{\overline{r(\lambda)}}{1+\mid r(\lambda) \mid^{2}}\delta_{+}^{2}e^{2it\theta}  &  1\\
\end{array}\right),\ \left\vert\lambda\right\vert>\lambda_{0};
\end{cases}
\end{align}

$\bullet$ $M^{(1)}(y, t; \lambda)=\mathbb{I}+O(\frac{1}{\lambda})$ as $\lambda\rightarrow\infty$.
\end{rhp}

Writing the $M^{(1)}(y,t; \lambda), M(y,t; \lambda)$ into the following progressive expansion form
\begin{align}\label{42} &M^{(1)}(y,t; \lambda)=M_{0}^{(1)}(y,t)+M_{1}^{(1)}(y,t)\lambda+O(\lambda^{2}),\quad \lambda\rightarrow 0,\notag\\
&M(y,t; \lambda)=M_{0}(y,t)+M_{1}(y,t)\lambda+O(\lambda^{2}),\quad \lambda\rightarrow 0,\end{align}
and combining the transformation \eqref{40}, we easily derive
\begin{align}\label{44} M_{0}(y,t)=M^{(1)}_{0}(y,t),\quad M_{1}(y,t)=M^{(1)}_{1}(y,t)-\delta_{1}M^{(1)}_{0}(y,t)\sigma_{3}. \end{align}
Hence, we have
\begin{align}\label{45} &iq(y,t)=\left[\left(M^{(1)}_{0}(y,t)\right)^{-1}M^{(1)}_{1}(y,t)\right]_{12},\notag\\
&ic_{+}=\left[\left(M^{(1)}_{0}(y,t)\right)^{-1}M^{(1)}_{1}(y,t)\right]_{11}-\delta_{1}. \end{align}

\subsection{The construction of the mixed $\bar{\partial}$-RH problem}
In order to construct the mixed $\bar{\partial}$-RH problem, we perform the continuous extensions of the jump
matrix off the real axis to deform  the oscillatory jump into new contours along which the jumps are decaying. We firstly define the contours
\begin{align}\label{46} &\Sigma_{1}^{\pm}=\pm\lambda_{0}+e^{\frac{(2\mp 1)i\pi}{4}}\mathbb{R}_{+}, \quad
\Sigma_{2}^{\pm}=\pm\lambda_{0}+e^{\frac{(-2\pm 1)i\pi}{4}}\mathbb{R}_{+},\notag\\
&\Sigma_{3}^{\pm}=\pm\lambda_{0}+e^{\frac{(2\pm 1)i\pi}{4}}h, \ \Sigma_{4}^{\pm}=\pm\lambda_{0}+e^{\frac{(-2\mp 1)i\pi}{4}}h, \ h\in(0,\sqrt{2}\lambda_{0}), \end{align}
which separate the complex plane $\mathbb{C}$ into ten open sectors $\Omega_{j}^{\pm}(j=1,\cdots, 4),\Omega_{5} $ and $\Omega_{6}$, see Fig. \ref{F2}.\\
\centerline{\begin{tikzpicture}[scale=2.0]
\draw[->][thick](-3,0)--(-2,0);
\draw[-][thick](-2.0,0)--(-1.0,0)node[below]{$-\lambda_{0}$};
\draw[-][thick](-1,0)--(0,0);
\draw[->][thick](0,1)--(0.5,0.5);
\draw[-][thick](0.5,0.5)--(1,0);
\draw[->][thick](0,-1)--(0.5,-0.5);
\draw[-][thick](0.5,-0.5)--(1,0);
\draw[-][thick](2,-1)--(1.5,-0.5);
\draw[<-][thick](1.5,-0.5)--(1,0);
\draw[-][thick](2,1)--(1.5,0.5);
\draw[<-][thick](1.5,0.5)--(1,0);
\draw[-][thick](0,1)--(-0.5,0.5);
\draw[->][thick](-1,0)--(-0.5,0.5);
\draw[-][thick](0,-1)--(-0.5,-0.5);
\draw[->][thick](-1,0)--(-0.5,-0.5);
\draw[-][thick](-1,0)--(-1.5,0.5);
\draw[<-][thick](-1.5,0.5)--(-2,1);
\draw[-][thick](-1,0)--(-1.5,-0.5);
\draw[<-][thick](-1.5,-0.5)--(-2,-1);
\draw[-][thick](0,0)--(1,0)node[below]{$\lambda_{0}$};
\draw[->][thick](1,0)--(2,0);
\draw[fill] (1,0) circle [radius=0.035];
\draw[fill] (-1,0) circle [radius=0.035];
\draw[-][thick](2.0,0)--(3.0,0);
\draw[fill] (0.3,1.2) node{$\Omega_{5}$};
\draw[fill] (0.3,-1.2) node{$\Omega_{6}$};
\draw[fill] (0.5,0.25) node{$\Omega_{3}^{+}$};
\draw[fill] (0.5,-0.25) node{$\Omega_{4}^{+}$};
\draw[fill] (-0.5,0.25) node{$\Omega_{3}^{-}$};
\draw[fill] (-0.5,-0.25) node{$\Omega_{4}^{-}$};
\draw[fill] (2,-0.5) node{$\Omega_{2}^{+}$};
\draw[fill] (2,0.5) node{$\Omega_{1}^{+}$};
\draw[fill] (-2,-0.5) node{$\Omega_{2}^{-}$};
\draw[fill] (-2,0.5) node{$\Omega_{1}^{-}$};
\draw[->][thick](0,1.5)--(0,0.5);
\draw[->][thick](0,0.5)--(0,-0.5);
\draw[-][thick](0,-0.5)--(0,-1.5);
\draw[fill] (0.12,0.4) node{$\Sigma_{5}$};
\draw[fill] (0.12,-0.4) node{$\Sigma_{5}$};
\draw[fill] (-1.8,1) node{$\Sigma_{1}^{-}$};
\draw[fill] (-1.8,-1) node{$\Sigma_{2}^{-}$};
\draw[fill] (1.8,1) node{$\Sigma_{1}^{+}$};
\draw[fill] (1.8,-1) node{$\Sigma_{2}^{+}$};
\draw[fill] (-0.5,0.7) node{$\Sigma_{3}^{-}$};
\draw[fill] (-0.5,-0.7) node{$\Sigma_{4}^{-}$};
\draw[fill] (0.5,0.7) node{$\Sigma_{3}^{+}$};
\draw[fill] (0.5,-0.7) node{$\Sigma_{4}^{+}$};
\end{tikzpicture}}\\
\begin{figure}[!htb]
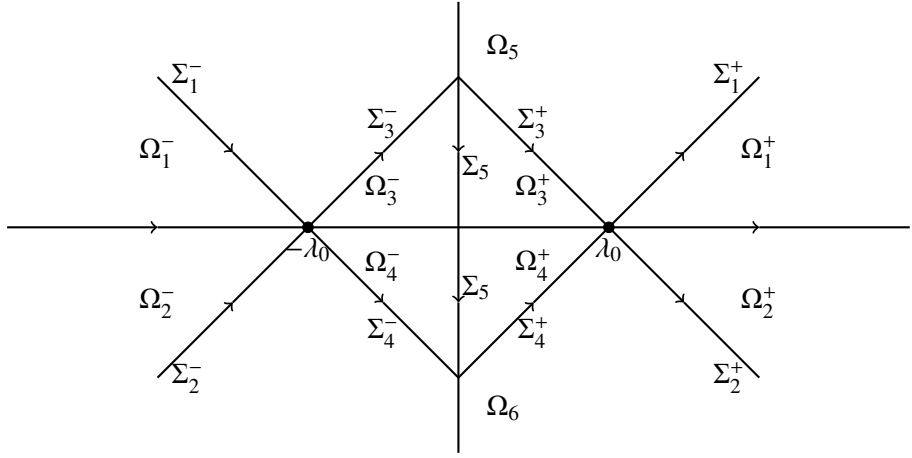

\centering
\caption{\footnotesize Deformation from $\mathbb{R}$ to the new contour $\Sigma=\Sigma_{1}^{\pm}\cup\Sigma_{2}^{\pm}\cup\Sigma_{3}^{\pm}\cup\Sigma_{4}^{\pm}\cup\Sigma_{5}, \Sigma_{5}=(-i\lambda_{0},-i\lambda_{0}\tan(\frac{\pi}{12}))\cup(-i\lambda_{0}\tan(\frac{\pi}{12}),
i\lambda_{0}\tan(\frac{\pi}{12}))\cup(i\lambda_{0}\tan(\frac{\pi}{12}),i\lambda_{0})$.}
\label{F2}
\end{figure}

\begin{prop}\label{prop1}
There are functions $R_{j}\rightarrow \mathbb{C}, j=1,\cdots, 4$ such that
\begin{align}\label{47} &R_{1}^{\pm}=\begin{cases}
r(\lambda)\delta^{-2}(\lambda) \qquad \lambda\in I_{\pm},\ I_{+}=(\lambda_{0},+\infty),\ I_{-}=(-\infty,-\lambda_{0}),\\
r(\pm\lambda_{0})\delta_{0}^{-2}(\pm\lambda_{0})(\lambda\mp\lambda_{0})^{-2i\nu(\pm\lambda_{0})} \qquad \lambda\in \Sigma_{3}^{\pm},\\
\end{cases}\notag\\
&R_{2}^{\pm}=\begin{cases}
\overline{r(\lambda)}\delta^{2}(\lambda) \qquad \lambda\in I_{\pm},\\
\overline{r(\pm\lambda_{0})}\delta_{0}^{2}(\pm\lambda_{0})(\lambda\mp\lambda_{0})^{2i\nu(\pm\lambda_{0})} \qquad \lambda\in \Sigma_{4}^{\pm},\\
\end{cases}\notag\\
&R_{3}^{\pm}=\begin{cases}
\frac{\overline{r(\lambda)}}{1+\mid r(\lambda)\mid^{2}}\delta_{+}^{2}(\lambda) \qquad \lambda\in (-\lambda_{0},\lambda_{0}),\\
\frac{\overline{r(\pm\lambda_{0})}}{1+\mid r(\pm\lambda_{0})\mid^{2}}\delta_{0}^{2}(\pm\lambda_{0})(\lambda\mp\lambda_{0})^{2i\nu(\pm\lambda_{0})} \qquad \lambda\in \Sigma_{1}^{\pm},\\
\end{cases}\notag\\
&R_{4}^{\pm}=\begin{cases}
\frac{r(\lambda)}{1+\mid r(\lambda)\mid^{2}}\delta_{-}^{-2}(\lambda) \qquad \lambda\in (-\lambda_{0},\lambda_{0}),\\
\frac{r(\pm\lambda_{0})}{1+\mid r(\pm\lambda_{0})\mid^{2}}\delta_{0}^{-2}(\pm\lambda_{0})(\lambda\mp\lambda_{0})^{-2i\nu(\pm\lambda_{0})} \qquad \lambda\in \Sigma_{2}^{\pm},\\
\end{cases}
\end{align}
which admit the following estimates
\begin{align}\label{48} &\left\vert R_{j}^{\pm}\right\vert\lesssim \sin^{2}\left(arg(\lambda\mp\lambda_{0})\right)+<Re\lambda>^{-1/2},\notag\\
&\left\vert\bar{\partial} R_{j}^{\pm}\right\vert\lesssim \left\vert\lambda\mp\lambda_{0}\right\vert^{-1/2}+\left\vert(p_{j}^{\pm})'(Re\lambda)\right\vert,\notag\\
&\left\vert\bar{\partial} R_{j}^{\pm}\right\vert=0, \qquad \mbox{if} \qquad \lambda\in \Omega_{5}\cup\Omega_{6}, \end{align}
where $<Re\lambda>=\sqrt{1+(Re\lambda)^{2}}, p_{1}^{\pm}=r(\lambda), p_{2}^{\pm}=\overline{r(\lambda)}, p_{3}^{\pm}=\frac{\overline{r(\lambda)}}{1+\mid r(\lambda)\mid^{2}}, p_{4}^{\pm}=\frac{r(\lambda)}{1+\mid r(\lambda)\mid^{2}}$.
\end{prop}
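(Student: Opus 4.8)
The plan is to build each $R_j^{\pm}$ as an explicit interpolation on the sector $\Omega_j^{\pm}$ between the exact scattering data prescribed on the adjacent real-axis interval and the ``frozen'' model value prescribed on the ray $\Sigma_j^{\pm}$. Working in polar coordinates centred at the stationary point, $\lambda=\pm\lambda_0+\rho e^{i\phi}$ with $\phi=\arg(\lambda\mp\lambda_0)$, I would set, schematically for $j=1$,
\[
R_1^{\pm}(\lambda)=r(\mathrm{Re}\,\lambda)\,\delta^{-2}(\lambda)\cos(2\phi)+r(\pm\lambda_0)\,\delta_0^{-2}(\pm\lambda_0)(\lambda\mp\lambda_0)^{-2i\nu(\pm\lambda_0)}\bigl(1-\cos(2\phi)\bigr),
\]
and analogously for $j=2,3,4$ with the combinations $p_j^{\pm}$, the appropriate power $(\lambda\mp\lambda_0)^{\pm2i\nu}$, and the relevant boundary value of $\delta$. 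The angular factor $\cos(2\phi)$ equals $1$ on every real-axis boundary (where $\phi=0$ or $\pi$), so $R_j^{\pm}$ reduces to the exact data there, and equals $0$ on each ray $\Sigma_j^{\pm}$ (where $\phi\in\{\pm\pi/4,\pm3\pi/4\}$), so $R_j^{\pm}$ reduces to the model there; this verifies both matching conditions in \eqref{47} by construction. In the central sectors $\Omega_5\cup\Omega_6$ I would freeze the cutoff so that $R_j^{\pm}$ coincides with the holomorphic model (or is set to zero), which is exactly what will force $\bar{\partial}R_j^{\pm}=0$ there.

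Next I would record the two analytic inputs on which all of \eqref{48} rests. Since $q_0\in H^{1,1}(\mathbb{R})$ and $a(\lambda)$ is bounded away from zero, the reflection coefficient, and hence each combination $p_j^{\pm}$, lies in $H^{1}(\mathbb{R})\cap L^{2,1}(\mathbb{R})$; the embedding $H^{1}\hookrightarrow C^{1/2}$ gives the H\"older bound $|r(s)-r(\pm\lambda_0)|\lesssim|s\mp\lambda_0|^{1/2}$, while the weighted norm yields $|r(s)|\lesssim\langle s\rangle^{-1/2}$ through $|r(s)|^{2}\le 2\|r\|_{L^{2}(s,\infty)}\|r'\|_{L^{2}}$. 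Combined with the fact that $\delta^{\pm2}(\lambda)$ and $(\lambda\mp\lambda_0)^{\pm2i\nu}$ are holomorphic off the real axis and uniformly bounded on the sectors, these give the $|R_j^{\pm}|$ bound: the $\langle\mathrm{Re}\,\lambda\rangle^{-1/2}$ term comes from the exact-data part $p_j^{\pm}(\mathrm{Re}\,\lambda)\delta^{\mp2}\cos(2\phi)$ via the decay of $r$ away from $\pm\lambda_0$, and the $\sin^{2}(\arg(\lambda\mp\lambda_0))$ term from the $O(1)$ model part weighted by $1-\cos(2\phi)=2\sin^{2}\phi$.

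For the $\bar{\partial}$-estimate I would use $\bar{\partial}=\tfrac12 e^{i\phi}\bigl(\partial_\rho+i\rho^{-1}\partial_\phi\bigr)$. Because $\delta^{\pm2}$ and the power $(\lambda\mp\lambda_0)^{\pm2i\nu}$ are holomorphic, $\bar{\partial}$ acts only on the two non-analytic ingredients, $r(\mathrm{Re}\,\lambda)$ and the angular cutoff $\cos(2\phi)$. Differentiating $r(\mathrm{Re}\,\lambda)$ produces $\tfrac12(p_j^{\pm})'(\mathrm{Re}\,\lambda)$, using $\bar{\partial}\,\mathrm{Re}\,\lambda=\tfrac12$, which is exactly the second term of the claimed bound. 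The delicate contribution is $\rho^{-1}\partial_\phi$ acting on $\cos(2\phi)$: it carries a factor $\rho^{-1}=|\lambda\mp\lambda_0|^{-1}$ times the difference between $r(\mathrm{Re}\,\lambda)\delta^{-2}$ and the frozen model. Here the H\"older-$1/2$ regularity of $r$ together with the matching estimate $|\delta(\lambda)-\delta_0(\pm\lambda_0)(\lambda\mp\lambda_0)^{\mp i\nu}|\lesssim|\lambda\mp\lambda_0|^{1/2}$ from \eqref{38} converts that difference into an $O(|\lambda\mp\lambda_0|^{1/2})$ quantity, so the product is $O(|\lambda\mp\lambda_0|^{-1/2})$, giving the first term. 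Finally $\bar{\partial}R_j^{\pm}=0$ on $\Omega_5\cup\Omega_6$ follows directly from the holomorphic (frozen) definition of the extension there.

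The step I expect to be the main obstacle is precisely this last $\bar{\partial}$ computation near $\pm\lambda_0$: one must ensure that differentiating the angular interpolation against the $\rho^{-1}$ singularity yields the integrable exponent $-\tfrac12$ rather than a non-integrable $-1$. This is what will make the subsequent $L^{\infty}\!\to\!L^{2}$ mapping estimates for the solid Cauchy operator in the pure $\bar{\partial}$-problem converge, and it relies essentially on the H\"older-$1/2$ continuity of $r$ inherited from $q_0\in H^{1,1}(\mathbb{R})$ and on the sub-leading $|\lambda\mp\lambda_0|^{1/2}$ control of $\delta$ near the stationary points.
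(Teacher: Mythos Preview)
Your construction is correct and is precisely the standard $\bar\partial$-extension from the Dieng--McLaughlin / Borghese--Jenkins--McLaughlin literature that the paper is invoking; note that the paper does not actually supply a proof of this proposition but simply states it and proceeds, relying implicitly on the references \cite{Tian-wang26,Li-WKI42}. Your angular interpolation via $\cos(2\phi)$, the use of $H^{1}\hookrightarrow C^{1/2}$ for the $|\lambda\mp\lambda_0|^{-1/2}$ bound, and the appeal to the $\delta$-estimate \eqref{38} to control the difference between exact and frozen data are exactly the ingredients that underlie the stated inequalities, so nothing is missing.
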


Taking a transformation
\begin{align}\label{49}
M^{(2)}=M^{(1)}R^{(2)},
\end{align}
where
\begin{align}\label{50}
R^{(2)}=\begin{cases}
\left(\begin{array}{cc}
    1  &  -R_{1}^{\pm}e^{-2it\theta}\\
    0  &  1\\
\end{array}\right),\qquad \lambda\in\Omega_{3}^{\pm},\\
\left(\begin{array}{cc}
    1  &  0\\
    R_{2}^{\pm}e^{2it\theta}  &  1\\
\end{array}\right),\qquad \lambda\in\Omega_{4}^{\pm},\\
\left(\begin{array}{cc}
    1  &  0\\
    -R_{3}^{\pm}e^{2it\theta}  &  1\\
\end{array}\right),\qquad \lambda\in\Omega_{1}^{\pm},\\
\left(\begin{array}{cc}
    1  &  R_{4}^{\pm}e^{-2it\theta}\\
    0  &  1\\
\end{array}\right),\qquad \lambda\in\Omega_{2}^{\pm},\\
\left(\begin{array}{cc}
    1  &  0\\
    0  &  1\\
\end{array}\right),\qquad \lambda\in\Omega_{5}\cup\Omega_{6},
\end{cases}
\end{align}
we can successfully construct the following $\bar{\partial}$-RH problem.

\begin{rhp}\label{rhp4}
 Find an analysis function $M^{(2)}(y, t; \lambda)$ which satisfies:\\

$\bullet$ $M^{(2)}(y, t; \lambda)$ is meromorphic in $\mathbb{C}\setminus \Sigma$;\\

$\bullet$ $M_{+}^{(2)}(y, t; \lambda)=M^{(2)}_{-}(y, t; \lambda)J^{(2)}(y, t; \lambda), \lambda\in \Sigma$, where
\begin{align}\label{51}
J^{(2)}(y, t; \lambda)=\begin{cases}
\left(\begin{array}{cc}
    1  &  R_{1}^{\pm}e^{-2it\theta}\\
    0  &  1\\
\end{array}\right),\qquad \lambda\in\Sigma_{3}^{\pm},\\
\left(\begin{array}{cc}
    1  &  0\\
    R_{2}^{\pm}e^{2it\theta}  &  1\\
\end{array}\right),\qquad \lambda\in\Sigma_{4}^{\pm},\\
\left(\begin{array}{cc}
    1  &  0\\
    R_{3}^{\pm}e^{2it\theta}  &  1\\
\end{array}\right),\qquad \lambda\in\Sigma_{1}^{\pm},\\
\left(\begin{array}{cc}
    1  &  R_{4}^{\pm}e^{-2it\theta}\\
    0  &  1\\
\end{array}\right),\qquad \lambda\in\Sigma_{2}^{\pm},\\
\left(\begin{array}{cc}
    1  &  (R_{1}^{-}-R_{1}^{+})e^{-2it\theta}\\
    0  &  1\\
\end{array}\right),\qquad \lambda\in (i\lambda_{0}\tan(\frac{\pi}{12}),i\lambda_{0}),\\
\left(\begin{array}{cc}
    1  &  0\\
    (R_{2}^{+}-R_{2}^{-})e^{2it\theta}  &  1\\
\end{array}\right),\qquad \lambda\in (-i\lambda_{0},-i\lambda_{0}\tan(\frac{\pi}{12})),\\
\left(\begin{array}{cc}
    1  &  0\\
    0  &  1\\
\end{array}\right),\qquad \lambda\in (-i\lambda_{0}\tan(\frac{\pi}{12}),i\lambda_{0}\tan(\frac{\pi}{12})).
\end{cases}
\end{align}

$\bullet$ $M^{(2)}(y, t; \lambda)=\mathbb{I}+O(\frac{1}{\lambda})$, as $\lambda\rightarrow\infty$.

$\bullet$ $\bar{\partial}M^{(2)}=M^{(2)}\bar{\partial}R^{(2)}(\lambda)$, as $\lambda\in\mathbb{C}\setminus \Sigma$, where
\begin{align}\label{52}
\bar{\partial}R^{(2)}=\begin{cases}
\left(\begin{array}{cc}
    0  &  -\bar{\partial}R_{1}^{\pm}e^{-2it\theta}\\
    0  &  0\\
\end{array}\right),\qquad \lambda\in\Omega_{3}^{\pm},\\
\left(\begin{array}{cc}
    0  &  0\\
    \bar{\partial}R_{2}^{\pm}e^{2it\theta}  &  0\\
\end{array}\right),\qquad \lambda\in\Omega_{4}^{\pm},\\
\left(\begin{array}{cc}
    0  &  0\\
    -\bar{\partial}R_{3}^{\pm}e^{2it\theta}  &  0\\
\end{array}\right),\qquad \lambda\in\Omega_{1}^{\pm},\\
\left(\begin{array}{cc}
    0  &  \bar{\partial}R_{4}^{\pm}e^{-2it\theta}\\
    0  &  0\\
\end{array}\right),\qquad \lambda\in\Omega_{2}^{\pm},\\
\left(\begin{array}{cc}
    0  &  0\\
    0  &  0\\
\end{array}\right),\qquad \lambda\in\Omega_{5}\cup\Omega_{6}.
\end{cases}
\end{align}
\end{rhp}

\subsection{The decomposition of the mixed $\bar{\partial}$-RH problem}
To solve the mixed $\bar{\partial}$-RH problem, i.e., RH  problem \ref{rhp4}, we decompose it into  a pure RH problem with $\bar{\partial}R^{(2)}=0$ and a pure $\bar{\partial}$-RH problem with $\bar{\partial}R^{(2)}\neq0$.

\subsubsection{Pure RH problem}
To solve the pure RH problem, we need to obtain a model RH problem. Firstly, we construct a RH problem for $M^{RHP}(y, t; \lambda)$.

\begin{rhp}\label{rhp5}
Find an analysis function $M^{RHP}(y, t; \lambda)$ which satisfies:\\

$\bullet$ $M^{RHP}(y, t; \lambda)$ is meromorphic in $\mathbb{C}\setminus \Sigma$;\\

$\bullet$ $M_{+}^{RHP}(y, t; \lambda)=M^{RHP}_{-}(y, t; \lambda)J^{(2)}(y, t; \lambda), \lambda\in \Sigma$, where $J^{(2)}(y, t; \lambda)$ is given in \eqref{51}.

$\bullet$ $M^{RHP}(y, t; \lambda)=\mathbb{I}+O(\frac{1}{\lambda})$, as $\lambda\rightarrow\infty$.
\end{rhp}
Next, we commit  to solve the RH  problem \ref{rhp5} by decomposing $M^{RHP}(y, t; \lambda)$ into two parts
\begin{align}\label{54}
M^{RHP}(\lambda)=\begin{cases}
E(\lambda),\qquad \lambda\in \mathbb{C}\backslash (\mathcal{U}_{\lambda_{0}}\cup\mathcal{U}_{-\lambda_{0}}),\\
E(\lambda)M^{(\lambda_{0})},\qquad \lambda\in \mathcal{U}_{\lambda_{0}},\\
E(\lambda)M^{(-\lambda_{0})},\qquad \lambda\in \mathcal{U}_{-\lambda_{0}},\\
\end{cases}
 \end{align} where $\mathcal{U}_{\pm\lambda_{0}}$ are the open neighborhood, given by
\begin{align}\label{53}
\mathcal{U}_{\pm\lambda_{0}}=\{\lambda:\left\vert \lambda\mp \lambda_{0}\right\vert\leq \epsilon\},
 \end{align}
of which $\epsilon$ is  an arbitrary  small constant.  $M^{(\pm\lambda_{0})}$ can be solved by the parabolic cylinder model,
and $E(\lambda)$ is the solution of a small norm RH problem.
\begin{prop}\label{prop2}
The jump matrix $J^{(2)}$ given in \eqref{51} meets the following estimates
\begin{align}\label{55}
\left\Vert J^{(2)}(\lambda)-\mathbb{I}\right\Vert_{L^{\infty}(\Sigma)}=\begin{cases}
O(e^{-24t\lambda_{0}\epsilon^{2}}),\qquad \lambda\in \Sigma_{j}^{\pm}\backslash (\mathcal{U}_{\lambda_{0}}\cup\mathcal{U}_{-\lambda_{0}}),\ j=1, 2,\\
O(e^{-8t\lambda_{0}\epsilon^{2}}),\qquad \lambda\in \Sigma_{j}^{\pm}\backslash (\mathcal{U}_{\lambda_{0}}\cup\mathcal{U}_{-\lambda_{0}}),\ j=3, 4,\\
O(t^{-\frac{1}{2}}\lambda_{0}^{-1}\left\vert \lambda\mp \lambda_{0}\right\vert^{-1}),\qquad \lambda\in \Sigma\cap\mathcal{U}_{\pm\lambda_{0}},\\
O(e^{-8t\lambda_{0}^{3}\tan^{3}(\frac{\pi}{12})}),\qquad \lambda\in (\pm i\lambda_{0}\tan(\frac{\pi}{12}),\pm i\lambda_{0}),\\
0,\qquad \lambda\in (- i\lambda_{0}\tan(\frac{\pi}{12}),\pm i\lambda_{0}\tan(\frac{\pi}{12})).\\
\end{cases}
 \end{align}
\end{prop}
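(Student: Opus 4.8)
The plan is to estimate, arc by arc, the single nonzero off-diagonal entry of $J^{(2)}-\mathbb{I}$, using the factorization of that entry into an amplitude $R_j^{\pm}$ and an oscillatory exponential $e^{\pm 2it\theta}$. The amplitudes are already under control: by Proposition \ref{prop1} together with the boundedness of $r$ and of $\delta$, and the observation that along any fixed ray $\lambda=\pm\lambda_0+e^{i\phi}s$ the power $(\lambda\mp\lambda_0)^{\mp 2i\nu(\pm\lambda_0)}$ has modulus $e^{\pm 2\nu(\pm\lambda_0)\phi}=O(1)$, each $R_j^{\pm}$ is uniformly bounded on its arc. Hence all the decay in $\|J^{(2)}-\mathbb{I}\|_{L^{\infty}}$ must come from $\mathrm{Re}(\pm 2it\theta)$, which I read off from the explicit formula $\mathrm{Re}(2it\theta)=-24t\big((\mathrm{Re}\,\lambda)^2-\tfrac13(\mathrm{Im}\,\lambda)^2-\lambda_0^2\big)\mathrm{Im}\,\lambda$ recorded before Figure \ref{F1}.

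Next I would substitute the ray parametrizations into this formula. On the outgoing rays $\Sigma_1^{\pm},\Sigma_2^{\pm}$ (angles $\pm\pi/4$) write $\lambda=\pm\lambda_0+e^{i\phi}s$ with $s>0$; a direct computation gives, for the exponential relevant to that arc in \eqref{51}, an exponent $\sim -c\,t\lambda_0 s^2$ with $c>0$ for small $s$. Since on $\Sigma_j^{\pm}\setminus(\mathcal{U}_{\lambda_0}\cup\mathcal{U}_{-\lambda_0})$ one has $s\ge\epsilon$ and the exponent is monotone in $s$ there, the worst case sits at $s=\epsilon$ and yields the Gaussian bound $e^{-c\,t\lambda_0\epsilon^2}$; the same scheme on the inner rays $\Sigma_3^{\pm},\Sigma_4^{\pm}$ (which terminate on the imaginary axis at $\pm i\lambda_0$) produces an analogous $e^{-c'\,t\lambda_0\epsilon^2}$. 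On the vertical segments between $\pm i\lambda_0\tan(\tfrac{\pi}{12})$ and $\pm i\lambda_0$ I set $\mathrm{Re}\,\lambda=0$, so that the exponent is $\sim -c''t\lambda_0^3$ at the inner endpoint $\mathrm{Im}\,\lambda=\lambda_0\tan(\tfrac{\pi}{12})$, giving the cubic-in-$\lambda_0$ rate $e^{-c''t\lambda_0^3\tan^3(\pi/12)}$. Finally, on the central segment $(-i\lambda_0\tan\tfrac{\pi}{12},\,i\lambda_0\tan\tfrac{\pi}{12})$ the extension matrix $R^{(2)}$ equals $\mathbb{I}$ (it lies in $\Omega_5\cup\Omega_6$), so $J^{(2)}-\mathbb{I}\equiv 0$ there. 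At every step I would check the sign of the exponent against the decay regions of Figure \ref{F1}, confirming that the chosen triangular factorization decays rather than grows.

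The one region where exponential smallness is unavailable is $\Sigma\cap\mathcal{U}_{\pm\lambda_0}$, and this is the main obstacle. Here $|\lambda\mp\lambda_0|\le\epsilon$, so the Gaussian exponent $c\,t\lambda_0|\lambda\mp\lambda_0|^2$ is only $O(1)$ and $e^{\pm 2it\theta}$ no longer forces smallness. Instead I would keep the amplitude bounded as above and apply the elementary inequality $e^{-x}\lesssim x^{-1/2}$ to $|e^{\pm 2it\theta}|=e^{-c\,t\lambda_0|\lambda\mp\lambda_0|^2}$; this converts the non-decaying oscillation into the algebraic factor $(t\lambda_0)^{-1/2}|\lambda\mp\lambda_0|^{-1}$, which is precisely the form $O\big(t^{-1/2}\lambda_0^{-1}|\lambda\mp\lambda_0|^{-1}\big)$ asserted and is exactly the input consumed afterwards by the parabolic-cylinder model $M^{(\pm\lambda_0)}$ on $\partial\mathcal{U}_{\pm\lambda_0}$. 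The only genuine bookkeeping is tracking the powers of $t$ and $\lambda_0$ through this step and pinning the numerical constants by evaluating each exponent at the relevant endpoint; the qualitative content—Gaussian decay off the stationary points, cubic decay along the imaginary axis, an exact identity near the origin, and an integrable algebraic remainder inside $\mathcal{U}_{\pm\lambda_0}$—follows directly from the two ingredients above.
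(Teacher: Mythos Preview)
Your proposal is correct and follows essentially the same route as the paper's proof: bound the amplitudes $R_j^{\pm}$ uniformly, compute $\mathrm{Im}\,\theta$ along each ray to extract a Gaussian factor $e^{-c\,t\lambda_0|\lambda\mp\lambda_0|^2}$, evaluate at the endpoint $|\lambda\mp\lambda_0|=\epsilon$ for the exterior arcs, and inside $\mathcal{U}_{\pm\lambda_0}$ invoke $e^{-x}\lesssim x^{-1/2}$ to convert the Gaussian into the algebraic factor $t^{-1/2}|\lambda\mp\lambda_0|^{-1}$. The paper carries this out explicitly only for $\Sigma_1^+$ and declares the remaining arcs analogous; your outline is simply a more systematic version of the same argument (note that $e^{-x}\lesssim x^{-1/2}$ actually yields $\lambda_0^{-1/2}$ rather than the $\lambda_0^{-1}$ printed in the statement, but since $\lambda_0$ is bounded in the similarity region this is harmless, and the paper's own computation has the same arithmetic).
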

\begin{proof}
We mainly prove the case of $\lambda\in \Sigma_{1}^{+}$, other cases can be presented in a similar way. On $\Sigma_{1}^{+}$, the  jump contour is $\lambda-\lambda_{0}=\left\vert\lambda-\lambda_{0}\right\vert e^{\frac{i\pi}{4}}$, then we get
\begin{align}\label{56}
Im(\theta)=2\left\vert \lambda-\lambda_{0}\right\vert^{2}\left(\sqrt{2}
\left\vert \lambda-\lambda_{0}\right\vert +6\lambda_{0}\right).
 \end{align}
Using $\eqref{48}, \eqref{51}, \eqref{56}$, we obtain
\begin{align}\label{57}
\left\vert R_{3}^{+}e^{2it\theta}\right\vert \leq \left\vert R_{3}^{+}\right\vert \left\vert e^{-2tIm(\theta)}\right\vert \lesssim e^{-4t\left\vert \lambda-\lambda_{0}\right\vert^{2}\left(\sqrt{2}
\left\vert \lambda-\lambda_{0}\right\vert +6\lambda_{0}\right)}\lesssim e^{-24t\lambda_{0}\left\vert \lambda-\lambda_{0}\right\vert^{2}}.
 \end{align}
Therefore, for $\lambda\in \Sigma_{1}^{+}\cap \mathcal{U}_{\lambda_{0}}$, we arrive at
\begin{align}\label{58}
\left\Vert J^{(2)}-\mathbb{I}\right\Vert_{L^{\infty}(\Sigma)}\lesssim t^{-\frac{1}{2}}\lambda_{0}^{-1}\left\vert \lambda-\lambda_{0}\right\vert^{-1}.
 \end{align}
It can be seen that in $\mathcal{U}_{\lambda_{0}}$, the jump matrix is monotonically nonuniformly decaying point by point to the identity matrix.

For $\lambda \in \Sigma_{1}^{+}\cap \{\left\vert \lambda-\lambda_{0}\right\vert\geq \epsilon\}$, we have
\begin{align}\label{59}
\left\Vert J^{(2)}-\mathbb{I}\right\Vert_{L^{\infty}(\Sigma)}\lesssim e^{-24t\lambda_{0}\epsilon^{2}}.
 \end{align}
\end{proof}

According to \eqref{55}, for large time, the $J^{(2)}(\lambda)-\mathbb{I}$ does not have a uniform estimate when $\lambda\in \mathcal{U}_{\pm\lambda_{0}}$, which implies that we can establish a local solvable model for the error function $E(\lambda)$
by introducing a model $M^{(\pm\lambda_{0})}$ to match the jumps of $M^{RHP}$ on
$\Sigma\cap (\mathcal{U}_{\lambda_{0}}\cup\mathcal{U}_{-\lambda_{0}})$.

Recalling the definition of $\theta(\lambda)$, we can introduce the transformation
\begin{align}\label{60}
\xi=\xi(\lambda)=\sqrt{48\lambda_{0}t}(\lambda\pm\lambda_{0})
 \end{align}
and the scaling operators
\begin{align}\label{61}
&f(\lambda)\mapsto (N_{A}f)(\lambda)=f(\frac{\xi}{\sqrt{48\lambda_{0}t}}-\lambda_{0}),\notag\\
&f(\lambda)\mapsto (N_{B}f)(\lambda)=f(\frac{\xi}{\sqrt{48\lambda_{0}t}}+\lambda_{0}).
 \end{align}
Therefore, considering the scaling operator  $N_{B}$, we have
\begin{align}\label{62}
&N_{B}(R_{1}^{+}e^{-2it\theta})=r_{0}\xi^{-2i\nu(\lambda_{0})}e^{-\frac{i\xi^{3}}{6\lambda_{0}\sqrt{48\lambda_{0}t}}-\frac{i\xi^{2}}{2}},\notag\\
&N_{B}(R_{2}^{+}e^{2it\theta})=\bar{r}_{0}\xi^{2i\nu(\lambda_{0})}e^{\frac{i\xi^{3}}{6\lambda_{0}\sqrt{48\lambda_{0}t}}+\frac{i\xi^{2}}{2}},\notag\\
&N_{B}(R_{3}^{+}e^{2it\theta})=\frac{\bar{r}_{0}}{1+\mid r_{0}\mid^{2}}\xi^{2i\nu(\lambda_{0})}e^{\frac{i\xi^{3}}{6\lambda_{0}\sqrt{48\lambda_{0}t}}+\frac{i\xi^{2}}{2}},\notag\\
&N_{B}(R_{4}^{+}e^{-2it\theta})=\frac{r_{0}}{1+\mid r_{0}\mid^{2}}\xi^{-2i\nu(\lambda_{0})}e^{-\frac{i\xi^{3}}{6\lambda_{0}\sqrt{48\lambda_{0}t}}-\frac{i\xi^{2}}{2}},
 \end{align}
where we have taken $r_{0}=r(\lambda_{0})\delta_{0}^{-2}(\lambda_{0})e^{16it\lambda_{0}^{3}}e^{i\nu(\lambda_{0})\ln(48t\lambda_{0})}$.
Then, we can get the following RH problem $M^{(\lambda_{0})}$ as $t\rightarrow\infty$ in
the $\xi$ plane, see Fig. \ref{F3}.\\
\centerline{\begin{tikzpicture}
\draw[fill] (0,0) circle [radius=0.035];
\draw[->][thick](-2.12,-2.12)--(-1.06,-1.06);
\draw[-][thick](-1.06,-1.06)--(0,0)node[below]{$O$};
\draw[->][thick](0,0)--(1.06,1.06);
\draw[-][thick](1.06,1.06)--(2.12,2.12);
\draw[->][thick](-2.12,2.12)--(-1.06,1.06);
\draw[-][thick](-1.06,1.06)--(0,0);
\draw[->][thick](0,0)--(1.06,-1.06);
\draw[->][dashed](-3,0)--(-1.5,0);
\draw[->][dashed](-1.5,0)--(1.5,0);
\draw[-][dashed](1.5,0)--(3.0,0)node[right]{$\mathbb{R}$};
\draw[-][thick](1.06,-1.06)--(2.12,-2.12);
\draw[fill] (0,1) node{$\Omega_{0}$};
\draw[fill] (0,-1) node{$\Omega_{0}$};
\draw[fill] (-1,0.5) node{$\Omega_{3}$};
\draw[fill] (1,0.5) node{$\Omega_{1}$};
\draw[fill] (-1,-0.5) node{$\Omega_{4}$};
\draw[fill] (1,-0.5) node{$\Omega_{2}$};
\draw[fill] (-1.6,2.0) node{$\widetilde{\Sigma_{3}}$};
\draw[fill] (-1.6,-2.0) node{$\widetilde{\Sigma_{4}}$};
\draw[fill] (1.6,2.0) node{$\widetilde{\Sigma_{1}}$};
\draw[fill] (1.6,-2.0) node{$\widetilde{\Sigma_{2}}$};
\end{tikzpicture}}\\
\begin{figure}[!htb]
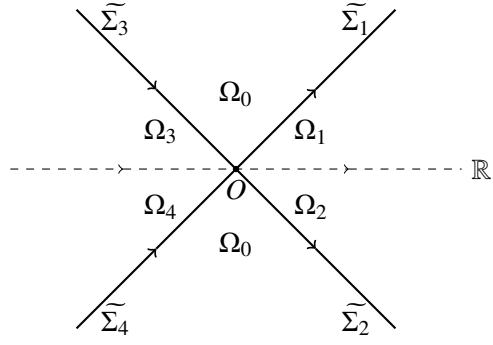

\centering
\caption{\footnotesize The jump contour $\widetilde{\Sigma_{B}}=\widetilde{\Sigma_{1}}\cup\widetilde{\Sigma_{2}}\cup\widetilde{\Sigma_{3}}\cup\widetilde{\Sigma_{4}}$  and domains $\Omega_{j}(j=0,\cdots, 4)$.}
\label{F3}
\end{figure}

\begin{rhp}\label{rhp6}
The analysis function $M^{(\lambda_{0})}(\xi)$ has the
following properties:\\

 $\bullet$ $M^{(\lambda_{0})}(\xi)$ is meromorphic in $\mathbb{C}\setminus \widetilde{\Sigma_{B}}$;\\

 $\bullet$ $M_{+}^{(\lambda_{0})}(\xi)=M^{(\lambda_{0})}_{-}(\xi)J^{(\lambda_{0})}(\xi), \lambda\in \widetilde{\Sigma_{B}}$, where
\begin{align}\label{63}
J^{(\lambda_{0})}(\xi)=
\begin{cases}
\left(\begin{array}{cc}
    1  &  r_{0}\xi^{-2i\nu(\lambda_{0})}e^{-\frac{i\xi^{2}}{2}}\\
    0  &  1\\
\end{array}\right),\qquad \xi\in\widetilde{\Sigma_{3}},\\
\left(\begin{array}{cc}
    1  &  0\\
    \bar{r}_{0}\xi^{2i\nu(\lambda_{0})}e^{\frac{i\xi^{2}}{2}}  &  1\\
\end{array}\right),\qquad \xi\in\widetilde{\Sigma_{4}},\\
\left(\begin{array}{cc}
    1  &  0\\
    \frac{\bar{r}_{0}}{1+\mid r_{0}\mid^{2}}\xi^{2i\nu(\lambda_{0})}e^{\frac{i\xi^{2}}{2}}  &  1\\
\end{array}\right),\qquad \xi\in\widetilde{\Sigma_{1}},\\
\left(\begin{array}{cc}
    1  &  \frac{r_{0}}{1+\mid r_{0}\mid^{2}}\xi^{-2i\nu(\lambda_{0})}e^{-\frac{i\xi^{2}}{2}}\\
    0  &  1\\
\end{array}\right),\qquad \xi\in\widetilde{\Sigma_{2}};
\end{cases}
\end{align}

$\bullet$ $M^{(\lambda_{0})}(\xi)=\mathbb{I}+O(\frac{1}{\xi})$ as $\xi\rightarrow\infty$.
\end{rhp}
As shown in ``Appendix \ref{A}", the RH problem \ref{rhp6} can be solved explicitly via using the parabolic
cylinder model problem.  For the another parabolic cylinder model $M^{(-\lambda_{0})}(\xi)$, it can be derived by the symmetric relation
\begin{align}\label{64.1}
M^{(-\lambda_{0})}(\xi)=\overline{M^{(\lambda_{0})}(-\bar{\xi})},\quad M_{1}^{(-\lambda_{0})}=-\overline{M_{1}^{(\lambda_{0})}}.
\end{align}

Since the jump matrix of  $M^{(\pm\lambda_{0})}$ and $M^{RHP}$ is consistent in disk $\mathcal{U}_{\pm\lambda_{0}}$, the matrix $E(\lambda)$ erects the jump of $M^{RHP}$ inside disk $\mathcal{U}_{\pm\lambda_{0}}$, and there is still a jump from $M^{RHP}$ outside the disk, so the jump path of $E(\lambda)$ is
\begin{align}\label{64}
\Sigma^{E}=\partial\mathcal{U}_{-\lambda_{0}}\cup\partial\mathcal{U}_{\lambda_{0}}\cup
\left(\Sigma\setminus\left(\mathcal{U}_{\lambda_{0}}\cup\mathcal{U}_{-\lambda_{0}}\right)\right)
 \end{align}
with clockwise direction for $\partial\mathcal{U}_{\pm\lambda_{0}}$. Then, the $E(\lambda)$ solves the following
RH problem, see Fig.\ref{F4}.\\
\centerline{\begin{tikzpicture}[scale=2.0]
\draw[->][thick](0,1)--(0,0.5);
\draw[-][thick](0,0.5)--(0,0);
\draw[->][thick](0,0)--(0,-0.5);
\draw[-][thick](0,-0.5)--(0,-1);
\draw[->][thick](0,1)--(0.5,0.5);
\draw[-][thick](0.5,0.5)--(0.65,0.35);
\draw[->][thick](0,-1)--(0.5,-0.5);
\draw[-][thick](0.5,-0.5)--(0.65,-0.35);
\draw[-][thick](2,-1)--(1.5,-0.5);
\draw[<-][thick](1.5,-0.5)--(1.35,-0.35);
\draw[-][thick](2,1)--(1.5,0.5);
\draw[<-][thick](1.5,0.5)--(1.35,0.35);
\draw[-][thick](0,1)--(-0.5,0.5);
\draw[->][thick](-0.65,0.35)--(-0.5,0.5);
\draw[-][thick](0,-1)--(-0.5,-0.5);
\draw[->][thick](-0.65,-0.35)--(-0.5,-0.5);
\draw[-][thick](-1.35,0.35)--(-1.5,0.5);
\draw[<-][thick](-1.5,0.5)--(-2,1);
\draw[-][thick](-1.35,-0.35)--(-1.5,-0.5);
\draw[<-][thick](-1.5,-0.5)--(-2,-1);
\draw[<-][thick](-0.5,0) arc(0:360:0.5);
\draw[-][thick](-0.5,0) arc(0:30:0.5);
\draw[-][thick](-0.5,0) arc(0:150:0.5);
\draw[-][thick](-0.5,0) arc(0:210:0.5);
\draw[-][thick](-0.5,0) arc(0:330:0.5);
\draw[<-][thick](1.5,0) arc(0:360:0.5);
\draw[-][thick](1.5,0) arc(0:30:0.5);
\draw[-][thick](1.5,0) arc(0:150:0.5);
\draw[-][thick](1.5,0) arc(0:210:0.5);
\draw[-][thick](1.5,0) arc(0:330:0.5);
\draw[fill] (-1,-0.62) node{$\partial\mathcal{U}_{-\lambda_{0}}$};
\draw[fill] (1,-0.62) node{$\partial\mathcal{U}_{\lambda_{0}}$};
\end{tikzpicture}}\\
\begin{figure}[!htb]
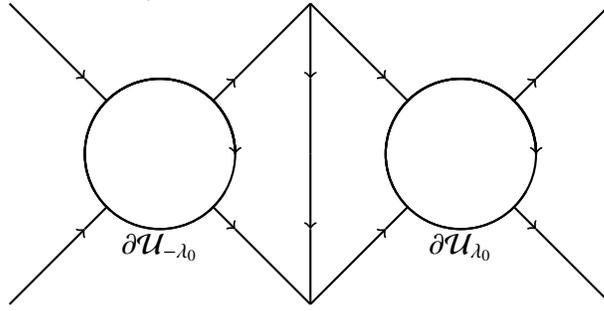

\centering
\caption{\footnotesize The jump contour $\Sigma^{E}$.}
\label{F4}
\end{figure}

\begin{rhp}\label{rhp7}
Find a matrix-valued function $E(\lambda)$ which satisfies:\\

 $\bullet$ $E(\lambda)$ is meromorphic in $\mathbb{C}\setminus \Sigma^{E}$;\\

 $\bullet$ $E_{+}(\lambda)=E_{-}(\lambda)J^{E}(\lambda), \lambda\in \Sigma^{E}$, where
\begin{align}\label{65}
J^{E}(\lambda)=
\begin{cases}
J^{(2)},\qquad \lambda\in\Sigma\setminus(\mathcal{U}_{-\lambda_{0}}\cup\mathcal{U}_{\lambda_{0}}),\\
M^{(\lambda_{0})},\qquad \lambda\in \partial\mathcal{U}_{\lambda_{0}},\\
M^{(-\lambda_{0})},\qquad \lambda\in \partial\mathcal{U}_{-\lambda_{0}}.
\end{cases}
\end{align}

$\bullet$ $E(\lambda)=\mathbb{I}+O(\frac{1}{\lambda})$ as $\lambda\rightarrow\infty$.
\end{rhp}

\begin{prop}\label{prop3}
The jump matrix $J^{E}$ defined in \eqref{65} arrives at the following estimates
\begin{align}\label{66}
\left\vert J^{E}(\lambda)-\mathbb{I}\right\vert=\begin{cases}
O(e^{-24t\lambda_{0}\epsilon^{2}}),\qquad \lambda\in \Sigma_{j}^{\pm}\backslash (\mathcal{U}_{\lambda_{0}}\cup\mathcal{U}_{-\lambda_{0}}),\ j=1, 2,\\
O(e^{-8t\lambda_{0}\epsilon^{2}}),\qquad \lambda\in \Sigma_{j}^{\pm}\backslash (\mathcal{U}_{\lambda_{0}}\cup\mathcal{U}_{-\lambda_{0}}),\ j=3, 4,\\
O(t^{-\frac{1}{2}}),\qquad \lambda\in \partial\mathcal{U}_{-\lambda_{0}}\cup\partial\mathcal{U}_{\lambda_{0}},\\
O(e^{-8t\lambda_{0}^{3}\tan^{3}(\frac{\pi}{12})}),\qquad \lambda\in (\pm i\lambda_{0}\tan(\frac{\pi}{12}),\pm i\lambda_{0}),\\
0,\qquad \lambda\in (- i\lambda_{0}\tan(\frac{\pi}{12}),\pm i\lambda_{0}\tan(\frac{\pi}{12})).\\
\end{cases}
 \end{align}
\end{prop}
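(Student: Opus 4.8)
The plan is to read off the jump matrix $J^E$ piecewise from its definition \eqref{65}, separating the portion of $\Sigma^{E}$ lying on the original contour $\Sigma$ outside the two disks from the two bounding circles $\partial\mathcal{U}_{\pm\lambda_{0}}$, and to handle these two families by entirely different mechanisms.

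First I would observe that on $\Sigma\setminus(\mathcal{U}_{\lambda_{0}}\cup\mathcal{U}_{-\lambda_{0}})$ the definition \eqref{65} gives $J^{E}=J^{(2)}$, so here nothing new is required: restricting the bounds of Proposition \ref{prop2}, equation \eqref{55}, to the region $\{|\lambda\mp\lambda_{0}|\geq\epsilon\}$ reproduces the first, second and fourth lines of \eqref{66} verbatim, while on the central vertical segment $(-i\lambda_{0}\tan(\frac{\pi}{12}),i\lambda_{0}\tan(\frac{\pi}{12}))$ the jump is identically $\mathbb{I}$, which gives the last line. All of these are exponentially small in $t$ and are simply inherited.

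The genuinely new content is the $O(t^{-1/2})$ bound on the circles $\partial\mathcal{U}_{\pm\lambda_{0}}$, where by \eqref{65} the jump coincides with the local parametrix $M^{(\pm\lambda_{0})}$. Here I would invoke the explicit solution of RH problem \ref{rhp6} constructed in Appendix \ref{A}, together with its large-$\xi$ expansion
\begin{align}
M^{(\lambda_{0})}(\xi)=\mathbb{I}+\frac{M_{1}^{(\lambda_{0})}}{\xi}+O(\xi^{-2}),\qquad \xi\to\infty,
\end{align}
in which $M_{1}^{(\lambda_{0})}$ is a bounded matrix. The decisive point is the scaling \eqref{60}: on the circle $|\lambda-\lambda_{0}|=\epsilon$ one has $|\xi|=\sqrt{48\lambda_{0}t}\,\epsilon$, which grows like $\sqrt{t}$, so substituting into the expansion gives
\begin{align}
\left\vert M^{(\lambda_{0})}(\lambda)-\mathbb{I}\right\vert\lesssim\frac{1}{|\xi|}=\frac{1}{\sqrt{48\lambda_{0}t}\,\epsilon}=O(t^{-1/2})
\end{align}
uniformly for $\lambda\in\partial\mathcal{U}_{\lambda_{0}}$; the estimate on $\partial\mathcal{U}_{-\lambda_{0}}$ then follows identically from the symmetry \eqref{64.1}. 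This yields the third line of \eqref{66}.

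The hard part will not be the exponential estimates, which are inherited directly from Proposition \ref{prop2}, but the control of the parabolic-cylinder parametrix on the circle boundary: one must know that $M^{(\lambda_{0})}$ is bounded with leading correction of exact order $\xi^{-1}$ and uniformly bounded coefficient $M_{1}^{(\lambda_{0})}$. This rests entirely on the explicit parabolic-cylinder computation of Appendix \ref{A} and on the fact that the scaling $\xi=\sqrt{48\lambda_{0}t}(\lambda\mp\lambda_{0})$ converts the fixed radius $\epsilon$ into a large argument of order $\sqrt{t}$; once that parametrix is in hand the remaining verification is a direct substitution.
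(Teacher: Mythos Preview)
Your proposal is correct and follows essentially the same approach as the paper: outside the disks you reduce to Proposition \ref{prop2} via $J^{E}=J^{(2)}$, and on $\partial\mathcal{U}_{\pm\lambda_{0}}$ you use the large-$\xi$ expansion of the parabolic-cylinder parametrix together with the scaling $\xi=\sqrt{48\lambda_{0}t}(\lambda\mp\lambda_{0})$ to obtain the $O(t^{-1/2})$ bound. The paper's proof is identical in structure, differing only in writing the remainder after substitution as $O(t^{-1}\ln t)$ rather than $O(\xi^{-2})$.
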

\begin{proof}
For $\lambda\in\Sigma\setminus(\mathcal{U}_{-\lambda_{0}}\cup\mathcal{U}_{\lambda_{0}})$, using the definition of $J^{E}$ \eqref{65}, we get
\begin{align}\label{67}
\left\vert J^{E}(\lambda)-\mathbb{I}\right\vert=\left\vert J^{(2)}-\mathbb{I}\right\vert.
 \end{align}
Thus, the estimates for $\left\vert J^{E}(\lambda)-\mathbb{I}\right\vert$ is consistent with Proposition \ref{prop2}.

For $\lambda\in \partial\mathcal{U}_{\pm\lambda_{0}}$, the variable $\xi=\sqrt{48\lambda_{0}t}(\lambda\pm\lambda_{0})$ tends to infinity as $t\rightarrow \infty$, and following  the asymptotic expansion
 \begin{align}\label{68}
M^{(\pm\lambda_{0})}(\xi)=\mathbb{I}+\frac{M^{(\pm\lambda_{0})}_{1}}{\xi}+O(\frac{1}{\xi^{2}}),\qquad \xi\rightarrow\infty,
 \end{align}
we have
 \begin{align}\label{69}
M^{(\pm\lambda_{0})}(\xi)=\mathbb{I}+\frac{M^{(\pm\lambda_{0})}_{1}}{\sqrt{48\lambda_{0}t}(\lambda\mp\lambda_{0})}+O(\frac{1}{t}\ln t),\qquad t\rightarrow\infty,
 \end{align}
 which leads to $\left\vert M^{(\pm\lambda_{0})}(\xi)-\mathbb{I} \right\vert=O(t^{-\frac{1}{2}})$. Using the definition of $J^{E}$ \eqref{65}, we get
\begin{align}\label{70}
\left\vert J^{E}(\lambda)-\mathbb{I}\right\vert=\left\vert M^{(\pm\lambda_{0})}-\mathbb{I}\right\vert=O(t^{-\frac{1}{2}}).
 \end{align}
\end{proof}
The estimates \eqref{66} denote that the $\left\vert J^{E}(\lambda)-\mathbb{I}\right\vert$ decays uniformly.
Thus, RH problem \ref{rhp7} is a small norm RH problem whose existence
and uniqueness are guaranteed by \cite{Li-WKI36,Li-WKI37}.

Next, the solution of RH problem \ref{rhp7} will be constructed via using Beals-Coifman theorem. Firstly, the jump matrix $J^{E}$ can be decomposed into
\begin{align}\label{71}
J^{E}=(b_{-})^{-1}b_{+},\quad b_{-}=\mathbb{I},\quad b_{+}=J^{E},
 \end{align}
and we have
\begin{align}\label{72}
&(\omega_{E})_{-}=\mathbb{I}-b_{-}=0,\quad (\omega_{E})_{+}=b_{+}-\mathbb{I}=J^{E}-\mathbb{I},\notag\\
&\omega_{E}=(\omega_{E})_{+}+(\omega_{E})_{-}=J^{E}-\mathbb{I},\notag\\
&C_{\omega_{E}}f=C_{-}(f(\omega_{E})_{+})+C_{+}(f(\omega_{E})_{-})=C_{-}(f(J^{E}-\mathbb{I})),
 \end{align}
where $C_{-}$ denotes the Cauchy projection operator, given by
\begin{align}\label{73}
C_{-}f(\lambda)=\lim_{\lambda'\rightarrow \lambda\in\Sigma^{E}}\frac{1}{2\pi i}\int_{\Sigma^{E}}\frac{f(s)}{s-\lambda'}\mathrm{d}s,
 \end{align}
and $\left\Vert C_{-}\right\Vert_{L^{2}}$ is a finite value. Then, the solution of RH problem \ref{rhp7} can be written as
\begin{align}\label{74}
E(\lambda)=\mathbb{I}+\frac{1}{2\pi i}\int_{\Sigma^{E}}\frac{\mu_{E}(s)(J^{E}-\mathbb{I})}{s-\lambda}\mathrm{d}s,
 \end{align}
where $\mu_{E}\in L^{2}(\Sigma^{E})$ admits $(1-C_{\omega_{E}})\mu_{E}=\mathbb{I}$.

From the Proposition \ref{prop3}, we get
\begin{align}\label{75}
\left\Vert J^{E}-\mathbb{I}\right\Vert_{L^{p}(\Sigma^{E})}=O(t^{-\frac{1}{2}}), \quad p\in[1,+\infty).
 \end{align}
Based on the properties of the
Cauchy projection operator $C_{-}$ and \eqref{75}, we have
\begin{align}\label{76}
&\left\Vert C_{\omega_{E}}\right\Vert_{L^{2}(\Sigma^{E})}\lesssim \left\Vert C_{-}\right\Vert_{L^{2}(\Sigma^{E})}\left\Vert J^{E}-\mathbb{I}\right\Vert_{L^{\infty}(\Sigma^{E})}\lesssim O(t^{-\frac{1}{2}}),\notag\\
&\left\Vert \mu_{E}-\mathbb{I}\right\Vert_{L^{2}(\Sigma^{E})}=\left\Vert C_{\omega_{E}}\mu_{E}\right\Vert_{L^{2}(\Sigma^{E})}=\left\Vert C_{-}\right\Vert_{L^{2}(\Sigma^{E})} \left\Vert\mu_{E}\right\Vert_{L^{2}(\Sigma^{E})}\left\Vert J^{E}-\mathbb{I}\right\Vert_{L^{\infty}(\Sigma^{E})}\lesssim O(t^{-\frac{1}{2}}).
 \end{align}
From the first formula of \eqref{76}, we know that $(1-C_{\omega_{E}})^{-1}$ is existent. Thus, the $\mu_{E}$ and $E(\lambda)$ are existent and
unique. This makes it reasonable to define $M^{RHP}$ in \eqref{54}.

Moreover, to recover the potential $q(y, t)$, we need
discuss the asymptotic expansion of $E(\lambda)$ as $\lambda\rightarrow 0$, given by
\begin{align}\label{77}
E(\lambda)=E_{0}+E_{1}\lambda+O(\lambda^{2}), \qquad \lambda\rightarrow 0,
 \end{align}
where
\begin{align}\label{78}
&E_{0}=I+\frac{1}{2\pi i}\int_{\Sigma^{E}}\frac{\mu_{E}(s)(J^{E}-\mathbb{I})}{s}\mathrm{d}s, \notag\\
&E_{1}=\frac{1}{2\pi i}\int_{\Sigma^{E}}\frac{\mu_{E}(s)(J^{E}-\mathbb{I})}{s^{2}}\mathrm{d}s.
 \end{align}
They admit the following asymptotic behavior  as $t\rightarrow\infty$
\begin{align}\label{79}
E_{0}&=\mathbb{I}+\frac{1}{2\pi i}\oint_{\partial\mathcal{U}_{\lambda_{0}}}\frac{J^{E}-\mathbb{I}}{s}\mathrm{d}s+\frac{1}{2\pi i}\oint_{\partial\mathcal{U}_{-\lambda_{0}}}\frac{J^{E}-\mathbb{I}}{s}\mathrm{d}s+O(t^{-1})\notag\\
&=\mathbb{I}+\frac{1}{2\pi i}\oint_{\partial\mathcal{U}_{\lambda_{0}}}\frac{M_{1}^{(\lambda_{0})}}{\sqrt{48\lambda_{0} t}s(s-\lambda_{0})}\mathrm{d}s+\frac{1}{2\pi i}\oint_{\partial\mathcal{U}_{-\lambda_{0}}}\frac{M_{1}^{(-\lambda_{0})}}{\sqrt{48\lambda_{0} t}s(s+\lambda_{0})}\mathrm{d}s+O(t^{-1}\ln t)\notag\\
&=\mathbb{I}+\frac{M_{1}^{(\lambda_{0})}}{\sqrt{48\lambda_{0} t}\lambda_{0}}-\frac{M_{1}^{(-\lambda_{0})}}{\sqrt{48\lambda_{0} t}\lambda_{0}}+O(t^{-1}\ln t), \notag\\
E_{1}&=\frac{1}{2\pi i}\oint_{\partial\mathcal{U}_{\lambda_{0}}}\frac{J^{E}-\mathbb{I}}{s^{2}}\mathrm{d}s+\frac{1}{2\pi i}\oint_{\partial\mathcal{U}_{-\lambda_{0}}}\frac{J^{E}-\mathbb{I}}{s^{2}}\mathrm{d}s+O(t^{-1})\notag\\
&= \frac{1}{2\pi i}\oint_{\partial\mathcal{U}_{\lambda_{0}}}\frac{M_{1}^{(\lambda_{0})}}{\sqrt{48\lambda_{0} t}s^{2}(s-\lambda_{0})}\mathrm{d}s+\frac{1}{2\pi i}\oint_{\partial\mathcal{U}_{-\lambda_{0}}}\frac{M_{1}^{(-\lambda_{0})}}{\sqrt{48\lambda_{0} t}s^{2}(s+\lambda_{0})}\mathrm{d}s+O(t^{-1}\ln t)\notag\\
&=\frac{M_{1}^{(\lambda_{0})}}{\sqrt{48\lambda_{0} t}\lambda_{0}^{2}}+\frac{M_{1}^{(-\lambda_{0})}}{\sqrt{48\lambda_{0} t}\lambda_{0}^{2}}+O(t^{-1}\ln t).
 \end{align}
Obviously, we also get
\begin{align}\label{80}
E_{0}^{-1}=\mathbb{I}+O(t^{-\frac{1}{2}}).
 \end{align}

\subsubsection{Pure $\bar{\partial}$-RH problem}
In this subsection, we mainly analyse the pure $\bar{\partial}$-problem with $\bar{\partial}R^{(2)}=0$ by defining
\begin{align}\label{81}
M^{(3)}(\lambda)=M^{(2)}(\lambda)\left(M^{RHP}(\lambda)\right)^{-1},
\end{align}
which is continuous and has no jumps in the complex plane.

\begin{rhp}\label{rhp8}
Find a matrix-valued function $M^{(3)}(\lambda)$ which satisfies:\\

$\bullet$ $M^{(3)}(\lambda)$  is continuous in  $\mathbb{C}\setminus (\mathbb{R}\cup\Sigma)$;\\

$\bullet$ $\bar{\partial}M^{(3)}(\lambda)=M^{(3)}(\lambda)W^{(3)}(\lambda), \lambda\in \mathbb{C}$, where
\begin{align}\label{82}
W^{(3)}(\lambda)=M^{RHP}(\lambda)\bar{\partial}R^{(2)}(\lambda)\left(M^{RHP}(\lambda)\right)^{-1};
\end{align}

$\bullet$ $M^{(3)}(\lambda)=\mathbb{I}+O(\frac{1}{\lambda})$ as $\lambda\rightarrow\infty$.
\end{rhp}

The pure $\bar{\partial}$-problem is solved by the following integral equation
\begin{align}\label{83}
M^{(3)}(\lambda)=\mathbb{I}-\frac{1}{\pi}\iint_{\mathbb{C}}\frac{M^{(3)}(s)W^{(3)}(s)}{s-\lambda}\mathrm{d}A(s),
\end{align}
where $\mathrm{d}A(s)$ is the Lebesgue measure. It can be  written into the operator form
\begin{align}\label{84}
(\mathbb{I}-\mathcal{S})M^{(3)}(\lambda)=\mathbb{I},
\end{align}
of which $\mathcal{S}$ is the Cauchy operator
\begin{align}\label{84}
\mathcal{S}[f](\lambda)=-\frac{1}{\pi}\iint_{\mathbb{C}}\frac{f(s)W^{(3)}(s)}{s-\lambda}\mathrm{d}A(s).
\end{align}

\begin{prop}\label{prop4}
For large time $t$,
\begin{align}\label{85}
\left\Vert \mathcal{S}\right\Vert_{L^{\infty}\rightarrow L^{\infty}}\lesssim t^{-\frac{1}{4}},
\end{align}
which implies that the operator $\mathbb{I}-\mathcal{S}$ is invertible and the solution of pure $\bar{\partial}$-problem exists and is
unique.
\end{prop}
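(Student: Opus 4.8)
The plan is to reduce the operator norm to a scalar double integral and then extract the decay from the support and the oscillation of $\bar{\partial}R^{(2)}$. From the definition of $\mathcal{S}$ in \eqref{84} one has, for every $f$, the pointwise bound $\|\mathcal{S}[f]\|_{L^{\infty}}\le\|f\|_{L^{\infty}}\cdot\sup_{\lambda}\frac{1}{\pi}\iint_{\mathbb{C}}\frac{|W^{(3)}(s)|}{|s-\lambda|}\,\mathrm{d}A(s)$, so that
\[
\|\mathcal{S}\|_{L^{\infty}\to L^{\infty}}\le \frac{1}{\pi}\sup_{\lambda\in\mathbb{C}}\iint_{\mathbb{C}}\frac{|W^{(3)}(s)|}{|s-\lambda|}\,\mathrm{d}A(s).
\]
Everything therefore reduces to a uniform-in-$\lambda$ estimate of this integral. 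Since $M^{RHP}$ solves the normalized RH problem \ref{rhp5}, both $M^{RHP}$ and $(M^{RHP})^{-1}$ are uniformly bounded on $\mathbb{C}$; hence by \eqref{82} the integrand obeys $|W^{(3)}(s)|\lesssim|\bar{\partial}R^{(2)}(s)|$, and by \eqref{52} it is supported only on the eight sectors $\Omega_{j}^{\pm}$, vanishing on $\Omega_{5}\cup\Omega_{6}$. By the symmetry of the configuration it suffices to treat one representative sector, say $\Omega_{1}^{+}$.

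On $\Omega_{1}^{+}$ I would write $s=\lambda_{0}+u+iv$ and combine two ingredients: the pointwise bounds of Proposition \ref{prop1}, here $|\bar{\partial}R_{3}^{+}|\lesssim|s-\lambda_{0}|^{-1/2}+|r'(\mathrm{Re}\,s)|$, and the exponential weight $|e^{2it\theta}|=e^{-2t\,\mathrm{Im}\,\theta}$. Expanding $\theta$ about the stationary point (note $\theta''(\lambda_{0})=24\lambda_{0}$ and $\theta(\lambda_{0})\in\mathbb{R}$) gives $\mathrm{Im}\,\theta\gtrsim\lambda_{0}\,\mathrm{Im}\big((s-\lambda_{0})^{2}\big)\gtrsim\lambda_{0}uv$ on this sector, consistent with \eqref{56}. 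Thus the integrand is controlled by $\big(|s-\lambda_{0}|^{-1/2}+|r'(\mathrm{Re}\,s)|\big)\,e^{-ct\lambda_{0}uv}\,|s-\lambda|^{-1}$.

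The integral then splits into a piece $I_{1}$ coming from the $|s-\lambda_{0}|^{-1/2}$ term and a piece $I_{2}$ coming from the $|r'|$ term. For each I would integrate first in the real direction $s_{R}=\lambda_{0}+u$, using the elementary identity $\|(s-\lambda)^{-1}\|_{L^{2}_{s_{R}}}=\sqrt{\pi}\,|v-\mathrm{Im}\,\lambda|^{-1/2}$ together with Cauchy--Schwarz (Hölder): for $I_{1}$ I pair $(s-\lambda)^{-1}$ in $L^{2}_{s_{R}}$ against $|s-\lambda_{0}|^{-1/2}e^{-ct\lambda_{0}uv}$, and for $I_{2}$ I use Hölder with $r'\in L^{2}(\mathbb{R})$ (guaranteed by $q_{0}\in H^{1,1}$). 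After the $s_{R}$-integration each piece becomes a single $v$-integral carrying the factor $|v-\mathrm{Im}\,\lambda|^{-1/2}$ and a Gaussian-type decay in $t$; rescaling the transverse variable to absorb $t\lambda_{0}$ extracts a power $t^{-1/4}$ uniformly in $\lambda$, which yields $\|\mathcal{S}\|_{L^{\infty}\to L^{\infty}}\lesssim t^{-1/4}$. Once this is below $1$ for $t$ large, $\mathbb{I}-\mathcal{S}$ is invertible by a Neumann series, so the integral equation \eqref{83} has the unique solution $M^{(3)}=(\mathbb{I}-\mathcal{S})^{-1}\mathbb{I}$, giving existence and uniqueness.

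The main obstacle is the third step. The inverse-square-root singularity $|s-\lambda_{0}|^{-1/2}$ of $\bar{\partial}R$ at the stationary point sits essentially at the edge of integrability once paired with $|s-\lambda|^{-1}$, so the Hölder exponents and the rescaling must be arranged so that the competition between this singularity and the decay $e^{-ct\lambda_{0}uv}$ produces precisely $t^{-1/4}$ rather than a weaker power. Additional care is required because $\lambda$ may lie inside the support region, where the surviving factor $|v-\mathrm{Im}\,\lambda|^{-1/2}$ must itself be integrated through its own singularity; verifying that the resulting bound is genuinely uniform in $\lambda\in\mathbb{C}$ is the delicate part of the argument.
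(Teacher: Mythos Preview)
Your proposal is correct and follows essentially the same route as the paper's proof: restrict by symmetry to a single sector, split the $\bar\partial R$ bound into the $|r'|$ piece and the $|s-\lambda_{0}|^{-1/2}$ piece, apply Cauchy--Schwarz/H\"older in the real direction to produce the factor $|v-\mathrm{Im}\,\lambda|^{-1/2}$, and then integrate the remaining $v$-integral against the Gaussian-type decay to extract $t^{-1/4}$. The only refinement to flag is that for the $|s-\lambda_{0}|^{-1/2}$ piece the paper uses H\"older with exponents $p>2$, $q<2$ (not straight $L^{2}$--$L^{2}$), since $|s-\lambda_{0}|^{-1/2}$ by itself is not in $L^{2}_{u}$ after the exponential is pulled out; your closing paragraph already anticipates exactly this issue.
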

\begin{proof}
We mainly prove the case of $\lambda\in\Omega_{1}^{+}$, the other case is similar. Supposing that $f\in L^{\infty}(\Omega_{1}^{+})$, we derive the following inequality
\begin{align}\label{86}
\left\Vert\mathcal{S}(f)\right\Vert&\lesssim \frac{1}{\pi}\iint_{\Omega_{1}^{+}}\frac{\left\vert fM^{RHP}\bar{\partial}R^{(2)}(M^{RHP})^{-1}\right\vert}{\left\vert s-\lambda\right\vert}\mathrm{d}A(s)\notag\\
&\lesssim \iint_{\Omega_{1}^{+}}\frac{\left\vert \bar{\partial}R_{3}^{+}\right\vert e^{Re(2it\theta(s))}}{\left\vert s-\lambda\right\vert}\mathrm{d}A(s)\lesssim I_{1}+I_{2},
 \end{align}
where
\begin{align}\label{87}
I_{1}=\iint_{\Omega_{1}^{+}}\frac{\left\vert (p_{3}^{+})'(Re s)\right\vert e^{Re(2it\theta(s))}}{\left\vert s-\lambda\right\vert}\mathrm{d}A(s),\ I_{2}=\iint_{\Omega_{1}^{+}}\frac{\left\vert s-\lambda_{0}\right\vert^{-\frac{1}{2}} e^{Re(2it\theta(s))}}{\left\vert s-\lambda\right\vert}\mathrm{d}A(s).
 \end{align}
Taking $s=u+iv, \lambda=x+iy$, then $Re(2it\theta(s))=8tv(3\lambda_{0}^{2}-3u^{2}+v^{2})$, we have
\begin{align}\label{88}
I_{1}&=\int_{0}^{+\infty}\int_{\lambda_{0}+v}^{+\infty}\frac{\left\vert (p_{3}^{+})'(u)\right\vert e^{8tv(3\lambda_{0}^{2}-3u^{2}+v^{2})}}{\left\vert s-\lambda\right\vert}\mathrm{d}u\mathrm{d}v\notag\\
&=\int_{0}^{+\infty}\int_{v}^{+\infty}\frac{\left\vert (p_{3}^{+})'(u)\right\vert e^{8tv(3\lambda_{0}^{2}-3(u+\lambda_{0})^{2}+v^{2})}}{\left\vert s-\lambda\right\vert}\mathrm{d}u\mathrm{d}v\notag\\
&\lesssim \int_{0}^{+\infty}\int_{v}^{+\infty}\frac{\left\vert (p_{3}^{+})'(u)\right\vert e^{8tv(3\lambda_{0}^{2}-3(v+\lambda_{0})^{2}+v^{2})}}{\left\vert s-\lambda\right\vert}\mathrm{d}u\mathrm{d}v\notag\\
&\lesssim \int_{0}^{+\infty}e^{-16tv(3v\lambda_{0}+v^{2})}\int_{v}^{+\infty}\frac{\left\vert (p_{3}^{+})'(u)\right\vert}{\left\vert s-\lambda\right\vert}\mathrm{d}u\mathrm{d}v\notag\\
&\lesssim \int_{0}^{+\infty}e^{-16tv(3v\lambda_{0}+v^{2})}\left\Vert (p_{3}^{+})'\right\Vert_{L^{2}}\left\Vert \frac{1}{s-\lambda}\right\Vert_{L^{2}} \mathrm{d}v\notag\\
&\lesssim \int_{0}^{+\infty}\frac{e^{-16tv(3v\lambda_{0}+v^{2})}}{\sqrt{\mid y-v\mid}} \mathrm{d}v\notag\\
&\lesssim \int_{0}^{y}\frac{e^{-48tv^{2}\lambda_{0}}}{\sqrt{ y-v}} \mathrm{d}v+\int_{y}^{+\infty}\frac{e^{-48tv^{2}\lambda_{0}}}{\sqrt{v-y}} \mathrm{d}v\notag\\
&\lesssim (t)^{-1/4}\int_{0}^{1}\frac{1}{\sqrt{\omega(1-\omega)}} \mathrm{d}\omega+(t)^{-1/4}\int_{0}^{\infty}\frac{e^{-48\lambda^{2}}}{\sqrt{\lambda}} \mathrm{d}\omega\notag\\
&\lesssim t^{-1/4}.
\end{align}
\begin{align}\label{89}
I_{2}&=\int_{0}^{+\infty}\int_{\lambda_{0}+v}^{+\infty}\frac{((u-\lambda_{0})^{2}+v^{2})^{-1/4} e^{8tv(3\lambda_{0}^{2}-3u^{2}+v^{2})}}{\left\vert s-\lambda\right\vert}\mathrm{d}u\mathrm{d}v\notag\\
&\lesssim \int_{0}^{+\infty}e^{-16tv(3v\lambda_{0}+v^{2})}\int_{v}^{+\infty}\frac{1}{\left\vert s-\lambda\right\vert \left\vert s-\lambda_{0}\right\vert^{1/2}}\mathrm{d}u\mathrm{d}v\notag\\
&\lesssim \int_{0}^{+\infty}e^{-48tv^{2}\lambda_{0}}\left\Vert\frac{1}{\left\vert s-\lambda\right\vert} \right\Vert_{L^{q}}\left\Vert\frac{1}{\left\vert s-\lambda_{0}\right\vert^{1/2}}\right\Vert_{L^{p}} \mathrm{d}v \notag\\
&\lesssim \int_{0}^{+\infty}e^{-48tv^{2}\lambda_{0}}v^{1/p-1/2}\left\vert v-y\right\vert^{1/q-1} \mathrm{d}v \notag\\
&= \int_{0}^{y}e^{-48tv^{2}\lambda_{0}}v^{1/p-1/2}\left\vert v-y\right\vert^{1/q-1} \mathrm{d}v+\int_{y}^{+\infty}e^{-48tv^{2}\lambda_{0}}v^{1/p-1/2}\left\vert v-y\right\vert^{1/q-1} \mathrm{d}v \notag\\
&\lesssim \int_{0}^{1}\sqrt{y}e^{-192ty^{2}\omega^{2}\lambda_{0}}\omega^{1/p-1/2}\left\vert 1-\omega\right\vert^{1/q-1} \mathrm{d}\omega+
(\lambda_{0}t)^{-1/4}\int_{0}^{+\infty}\eta^{-1/2}e^{-\eta}\mathrm{d}\eta\notag\\
&\lesssim t^{-1/4}.
 \end{align}
\end{proof}

Similarly, as $\lambda\rightarrow 0$, we extend $M^{(3)}(\lambda)$ into following form
\begin{align}\label{90}
M^{(3)}(\lambda)=\mathbb{I}+M^{(3)}_{0}+M^{(3)}_{1}\lambda+O(\lambda^{2}), \quad \lambda\rightarrow 0,
 \end{align}
\begin{align}\label{91}
&M^{(3)}_{0}=-\frac{1}{\pi}\iint_{\mathbb{C}}\frac{M^{(3)}(s)W^{(3)}(s)}{s}\mathrm{d}A(s),\notag\\
&M^{(3)}_{1}=-\frac{1}{\pi}\iint_{\mathbb{C}}\frac{M^{(3)}(s)W^{(3)}(s)}{s^{2}}\mathrm{d}A(s).
\end{align}

\begin{prop}\label{prop5}
For large time $t$, $M^{(3)}_{0}$ and $M^{(3)}_{1}$ admit the following inequality
\begin{align}\label{92}
\left\vert M^{(3)}_{0}\right\vert \lesssim t^{-\frac{3}{4}},\qquad \left\vert M^{(3)}_{1}\right\vert \lesssim t^{-\frac{3}{4}}.
 \end{align}
\end{prop}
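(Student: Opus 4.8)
The plan is to estimate the two area integrals in \eqref{91} directly, exploiting the fact that the evaluation point $\lambda=0$ lies \emph{outside} the support of $\bar{\partial}R^{(2)}$. First I would collect the bounds already at hand: by Proposition \ref{prop4} the solution satisfies $\left\Vert M^{(3)}\right\Vert_{L^{\infty}}\lesssim 1$, while $M^{RHP}$ and $(M^{RHP})^{-1}$ are uniformly bounded on the support of $\bar{\partial}R^{(2)}$ (being $E(\lambda)$ times a bounded parabolic cylinder factor). Hence from \eqref{82} one has $\left\vert W^{(3)}(s)\right\vert\lesssim\left\vert\bar{\partial}R^{(2)}(s)\right\vert$. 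Since $\bar{\partial}R^{(2)}$ vanishes on $\Omega_{5}\cup\Omega_{6}$ and is supported in $\bigcup_{j=1}^{4}\Omega_{j}^{\pm}$, every contributing $s$ obeys $\left\vert s\right\vert\gtrsim\lambda_{0}$; because $\lambda_{0}=\sqrt{y/12t}$ stays bounded below in the oscillating sector $y>0,\ \left\vert y/t\right\vert=O(1)$, the weights $1/\left\vert s\right\vert$ and $1/\left\vert s\right\vert^{2}$ in \eqref{91} are uniformly bounded there. This reduces both estimates to controlling $\iint_{\Omega_{j}^{\pm}}\left\vert\bar{\partial}R_{j}^{\pm}(s)\right\vert\,e^{\mathrm{Re}(2it\theta(s))}\,\mathrm{d}A(s)$, exactly the integral shape of Proposition \ref{prop4}, but now \emph{without} the Cauchy singularity $1/\left\vert s-\lambda\right\vert$.

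I would treat the representative sector $\Omega_{1}^{+}$ and invoke the estimate \eqref{48}, which splits $\left\vert\bar{\partial}R_{3}^{+}\right\vert$ into a derivative part $\left\vert(p_{3}^{+})'(\mathrm{Re}\,s)\right\vert$ and a singular part $\left\vert s-\lambda_{0}\right\vert^{-1/2}$. Writing $s=u+iv$ and $u=\lambda_{0}+w$ with $w\ge v\ge 0$, the phase obeys $\mathrm{Re}(2it\theta)=8tv(3\lambda_{0}^{2}-3u^{2}+v^{2})\le -48t\lambda_{0}vw$, so $e^{\mathrm{Re}(2it\theta)}\le e^{-48t\lambda_{0}vw}$. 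The decisive difference from Proposition \ref{prop4} is that, with the bounded weight $1/\left\vert s\right\vert$ replacing $1/\left\vert s-\lambda\right\vert$, the full Gaussian is now available for the inner $u$-integral rather than being spent on an $L^{2}$ singularity.

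For the derivative part, Cauchy--Schwarz in $u$ gives $\int_{v}^{\infty}\left\vert(p_{3}^{+})'(u)\right\vert e^{\mathrm{Re}(2it\theta)}\,\mathrm{d}u\lesssim\left\Vert(p_{3}^{+})'\right\Vert_{L^{2}}\bigl(\int_{v}^{\infty}e^{-96t\lambda_{0}vw}\,\mathrm{d}w\bigr)^{1/2}\lesssim (tv)^{-1/2}e^{-ctv^{2}}$, after which the $v$-integral $\int_{0}^{\infty}(tv)^{-1/2}e^{-ctv^{2}}\,\mathrm{d}v$ scales, via $v\mapsto\tau t^{-1/2}$, as $t^{-3/4}$. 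For the singular part, bounding $\left\vert s-\lambda_{0}\right\vert^{-1/2}\le w^{-1/2}$ and substituting $\sigma=48t\lambda_{0}vw$ yields $\int_{v}^{\infty}w^{-1/2}e^{-48t\lambda_{0}vw}\,\mathrm{d}w=(48t\lambda_{0}v)^{-1/2}\Gamma(\tfrac12,48t\lambda_{0}v^{2})$, whose $v$-integral again scales as $t^{-3/4}$ under the same rescaling (the incomplete Gamma factor keeps both endpoints convergent). Summing the analogous contributions over all sectors $\Omega_{j}^{\pm}$ gives $\left\vert M^{(3)}_{0}\right\vert\lesssim t^{-3/4}$; the bound for $M^{(3)}_{1}$ is identical, the only change being the uniformly bounded weight $1/\left\vert s\right\vert^{2}$.

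The main obstacle I anticipate is precisely bookkeeping this gain of a half power: one must verify that replacing the Cauchy kernel by a bounded weight genuinely frees the full exponential decay for the inner integral, so that the characteristic scaling $v\sim t^{-1/2}$ upgrades the $t^{-1/4}$ of Proposition \ref{prop4} to $t^{-3/4}$, while simultaneously confirming that $\lambda_{0}$ remains bounded away from the origin throughout the sector so that the weights $1/\left\vert s\right\vert,\,1/\left\vert s\right\vert^{2}$ never blow up on the support of $\bar{\partial}R^{(2)}$.
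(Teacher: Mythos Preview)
Your strategy is essentially the paper's: treat a representative sector (the paper also does only $\Omega_{1}^{+}$), split $\bar{\partial}R_{j}^{\pm}$ via \eqref{48} into the derivative piece and the $|s-\lambda_{0}|^{-1/2}$ piece, and exploit that without the Cauchy kernel $1/|s-\lambda|$ the full exponential decay is available for the inner integral, upgrading $t^{-1/4}$ to $t^{-3/4}$. The paper keeps the weight $1/|s|$ inside the integral and bounds it in the course of the Cauchy--Schwarz/H\"older estimates (compare \eqref{94}--\eqref{95}), whereas you discard it at the outset; for the singular piece the paper uses an $L^{p}$--$L^{q}$ H\"older argument while you compute directly via an incomplete Gamma substitution. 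These are stylistic variants of the same proof, and your handling of $M^{(3)}_{1}$ via $|s|^{-2}\le \lambda_{0}^{-1}|s|^{-1}$ is exactly the paper's last line.

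One small inaccuracy to fix: the claim that every $s$ in the support of $\bar{\partial}R^{(2)}$ satisfies $|s|\gtrsim\lambda_{0}$ is true for $\Omega_{1}^{\pm},\Omega_{2}^{\pm}$ but \emph{not} for the inner triangles $\Omega_{3}^{\pm},\Omega_{4}^{\pm}$, each of which has the origin as a vertex (their boundaries include the segment of $\Sigma_{5}$ through $0$). So you cannot simply drop $1/|s|$ globally. This is not fatal: near $s=0$ in $\Omega_{3}^{+}$ one has $u\le\lambda_{0}/2$, hence $\lambda_{0}^{2}-u^{2}\ge 3\lambda_{0}^{2}/4$ and $\mathrm{Re}(-2it\theta)\le -18t\lambda_{0}^{2}v$, while $|\bar{\partial}R_{1}^{+}|$ stays bounded there because the only singular point is $\lambda_{0}$. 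A short separate estimate (polar coordinates, or carrying $1/|s|$ through as the paper does in \eqref{94}) shows this corner contributes at most $O(t^{-1}\ln t)$, well within $t^{-3/4}$. Just be explicit that your ``$|s|\gtrsim\lambda_{0}$'' shortcut applies only to the outer sectors and handle the inner ones with this additional remark.
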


\begin{proof}
We mainly focus on the case of $\lambda\in\Omega_{1}^{+}$, the other case is similar.
\begin{align}\label{93}
\left\vert M^{(3)}_{0}\right\vert &\lesssim \frac{1}{\pi}\iint_{\Omega_{1}^{+}}\frac{\left\vert M^{(3)}(s) M^{RHP}(s)\bar{\partial}R^{(2)}(s)\left(M^{RHP}(s)\right)^{-1}\right\vert}{\left\vert s\right\vert}\mathrm{d}A(s)\notag\\
&\lesssim \frac{1}{\pi}\left\Vert M^{(3)}(s)\right\Vert_{L^{\infty}(\Omega_{1}^{+})} \left\Vert M^{RHP}(s) \right\Vert_{L^{\infty}(\Omega_{1}^{+})} \left\Vert \left(M^{RHP}(s)\right)^{-1}\right\Vert_{L^{\infty}(\Omega_{1}^{+})}\iint_{\Omega_{1}^{+}}\frac{\left\vert \bar{\partial}R_{3}^{+}e^{2it\theta}\right\vert}{\left\vert s\right\vert}\mathrm{d}A(s)\notag\\
&\lesssim \iint_{\Omega_{1}^{+}}\frac{\left\vert (p_{3}^{+})'\right\vert e^{8tv(3\lambda_{0}^{2}-3u^{2}+v^{2})}}{\left\vert s\right\vert}\mathrm{d}A(s)+\iint_{\Omega_{1}^{+}}\frac{e^{8tv(3\lambda_{0}^{2}-3u^{2}+v^{2})}}{\left\vert s\right\vert\left\vert s-\lambda_{0}\right\vert^{1/2}}\mathrm{d}A(s)\notag\\
&\lesssim I_{3}+I_{4}.
\end{align}
\begin{align}\label{94}
I_{3}&=\iint_{\Omega_{1}^{+}}\frac{\left\vert (p_{3}^{+})'\right\vert e^{8tv(3\lambda_{0}^{2}-3u^{2}+v^{2})}}{\left\vert s\right\vert}\mathrm{d}A(s)=\int_{0}^{+\infty}\int_{\lambda_{0}+v}^{+\infty}\frac{\left\vert (p_{3}^{+})'\right\vert e^{8tv(3\lambda_{0}^{2}-3u^{2}+v^{2})}}{\sqrt{u^{2}+v^{2}}}\mathrm{d}u\mathrm{d}v\notag\\
&\lesssim\int_{0}^{+\infty}\left\Vert (p_{3}^{+})'\right\Vert_{L^{2}}\left(\int_{\lambda_{0}+v}^{+\infty} \frac{e^{16tv(3\lambda_{0}^{2}-3u^{2}+v^{2})}}{u^{2}+v^{2}}\mathrm{d}u\right)^{1/2}\mathrm{d}v\notag\\
&\lesssim \int_{0}^{+\infty}\left(\int_{v}^{+\infty} \frac{e^{-96tv\lambda_{0}u}}{(u+\lambda_{0})^{2}+v^{2}}\mathrm{d}u\right)^{1/2}\mathrm{d}v\notag\\
&\lesssim \int_{0}^{+\infty} \frac{1}{\sqrt{(v+\lambda_{0})^{2}+v^{2}}}\left(\int_{v}^{+\infty} e^{-96tv\lambda_{0}u}\mathrm{d}u\right)^{1/2}\mathrm{d}v\notag\\
&\lesssim \int_{0}^{+\infty} \left(\int_{v}^{+\infty} e^{-96tv\lambda_{0}u}\mathrm{d}u\right)^{1/2}\mathrm{d}v\notag\\
&\lesssim t^{-1/2}\int_{0}^{+\infty} \frac{1}{\sqrt{v}}e^{-48tv^{2}\lambda_{0}}\mathrm{d}v\notag\\
&\lesssim t^{-3/4}.
\end{align}
\begin{align}\label{95}
I_{4}&=\iint_{\Omega_{1}^{+}}\frac{e^{8tv(3\lambda_{0}^{2}-3u^{2}+v^{2})}}{\left\vert s\right\vert\left\vert s-\lambda_{0}\right\vert^{1/2}}\mathrm{d}A(s)=\int_{0}^{+\infty}\int_{\lambda_{0}+v}^{+\infty}\frac{e^{8tv(3\lambda_{0}^{2}-3u^{2}+v^{2})}}{\left\vert s\right\vert\left\vert s-\lambda_{0}\right\vert^{1/2}}\mathrm{d}u\mathrm{d}v\notag\\
&\lesssim\int_{0}^{+\infty}\left\Vert\frac{1}{\sqrt{\left\vert s-\lambda_{0}\right\vert}}\right\Vert_{L^{p}}
\left(\int_{v}^{+\infty}\frac{e^{-48qtvu\lambda_{0}}}{((u+\lambda_{0})^{2}+v^{2})^{q/2}}\mathrm{d}u\right)^{1/q}\mathrm{d}v\notag\\
&\lesssim\int_{0}^{+\infty}v^{1/p-1/2}\frac{1}{((v+\lambda_{0})^{2}+v^{2})^{q/2}}
\left(\int_{v}^{+\infty}e^{-48qtvu\lambda_{0}}\mathrm{d}u\right)^{1/q}\mathrm{d}v\notag\\
&\lesssim\int_{0}^{+\infty}v^{1/p-1/2}\left(\int_{v}^{+\infty}e^{-48qtvu\lambda_{0}}\mathrm{d}u\right)^{1/q}\mathrm{d}v\notag\\
&\lesssim\int_{0}^{+\infty}v^{1/p-1/2}(qtv)^{-1/q}e^{-48tv^{2}\lambda_{0}}\mathrm{d}v\notag\\
&\lesssim t^{-1/q}\int_{0}^{+\infty}v^{2/p-3/2}e^{-48tv^{2}\lambda_{0}}\mathrm{d}v\notag\\
&\lesssim t^{-3/4}\int_{0}^{+\infty}\omega^{2/p-3/2}e^{-48\omega^{2}}\mathrm{d}v\notag\\
&\lesssim t^{-3/4}.
\end{align}
Then, we have $\left\vert M^{(3)}_{0}\right\vert \lesssim t^{-\frac{3}{4}}$. Furthermore, since $(u^{2}+v^{2})^{-1/2}\leq\frac{1}{\lambda_{0}}$, we arrive at $\left\vert M^{(3)}_{1}\right\vert \lesssim \frac{\left\vert M^{(3)}_{0}\right\vert}{\lambda_{0}} \lesssim t^{-\frac{3}{4}}$.
\end{proof}

\subsection{The final step}
According to a series of transformation including \eqref{49}, \eqref{54} and \eqref{81}, we obtain
\begin{align}\label{97}
M^{(1)}(\lambda)=M^{(3)}(\lambda)E(\lambda)(R^{(2)}(\lambda))^{-1},\quad \lambda\in \mathbb{C}\backslash (\mathcal{U}_{\lambda_{0}}\cup\mathcal{U}_{-\lambda_{0}}).
\end{align}
Taking $\lambda\rightarrow 0$ in the above equality, we get
\begin{align}\label{98}
M_{0}^{(1)}=\left(\mathbb{I}+M^{(3)}_{0}\right)E_{0},\qquad M_{1}^{(1)}=\left(\mathbb{I}+M^{(3)}_{0}\right)E_{1}+M^{(3)}_{1}E_{0},
\end{align}
which leads to
\begin{align}\label{99}
(M_{0}^{(1)})^{-1}M_{1}^{(1)}=\left(\left(\mathbb{I}+M^{(3)}_{0}\right)E_{0}\right)^{-1}\left(\left(\mathbb{I}+M^{(3)}_{0}\right)E_{1}+M^{(3)}_{1}E_{0}\right)=E_{1}+O(t^{-3/4}).
\end{align}
Combining the reconstruction formula  \eqref{45} and formulas \eqref{64.1}, \eqref{79}, we  immediately obtain
\begin{align}\label{100}
&q(x,t)=q(y,t)=2\lambda_{0}^{-2}(48t\lambda_{0})^{-\frac{1}{2}}Im[M_{1}^{(\lambda_{0})}]_{12}+O(t^{-3/4}),\notag\\
&c_{+}=2\lambda_{0}^{-2}(48t\lambda_{0})^{-\frac{1}{2}}Im[M_{1}^{(\lambda_{0})}]_{11}+i\delta_{1}+O(t^{-3/4}).
\end{align}
According to \eqref{A9}, we finally derive
\begin{align}\label{101}
q(x,t)=\sqrt{\frac{\nu(\hat{\lambda}_{0})}{12t\hat{\lambda}_{0}^{5}}}\sin\left(16t\hat{\lambda}_{0}^{3}+\nu(\hat{\lambda}_{0})\ln(48t\hat{\lambda}_{0})
+\Theta\right)+O(t^{-3/4}),
\end{align}
where
\begin{align}\label{102}
&\Theta=-\frac{5}{4}\pi-arg\left(\Gamma(i\nu(\hat{\lambda}_{0}))\right)-arg\left(\bar{r}(\hat{\lambda}_{0})\right)
+2\left(\int_{-\infty}^{-\hat{\lambda}_{0}}+\int_{\hat{\lambda}_{0}}^{+\infty}\right)\ln\left\vert s-\hat{\lambda}_{0}\right\vert \mathrm{d}\nu(s)
+2i\hat{\lambda}_{0}\delta_{1},\notag\\
&\nu(s)=-\frac{1}{2\pi}\ln\left(1+\left\vert r(s)\right\vert^{2}\right),\quad \hat{\lambda}_{0}=\sqrt{\frac{x}{12t}}.
\end{align}

\begin{appendices}
\section{}\label{A}
In this part, we aim to solve the model RH problem and provide $[M_{1}^{(\lambda_{0})}]_{12}$ explicitly via
introducing the following transformation(see Figure \ref{F3})
\begin{align}\label{A1}
M^{mod}=M^{(\lambda_{0})}G_{j},\qquad  \xi \in \Omega_{j},\qquad j=0, \cdots, 4,
\end{align}
where
\begin{gather}\label{A2}
G_{0}=e^{-\frac{1}{4}i\xi^{2}\sigma_{3}}\xi^{-i\nu(\lambda_{0})\sigma_{3}},\notag\\
G_{1}=G_{0}\left(\begin{array}{cc}
    1  &  0\\
    \frac{\overline{r_{0}}}{1+\left\vert r_{0}\right\vert^{2}}  &  1\\
\end{array}\right),\quad G_{2}=G_{0}\left(\begin{array}{cc}
    1  &  -\frac{r_{0}}{1+\left\vert r_{0}\right\vert^{2}}\\
    0  &  1\\
\end{array}\right),\notag\\
G_{3}=G_{0}\left(\begin{array}{cc}
    1  &  r_{0}\\
    0  &  1\\
\end{array}\right),\quad G_{4}=G_{0}\left(\begin{array}{cc}
    1  &  0\\
    -\overline{r_{0}}  &  1\\
\end{array}\right).
\end{gather}
The above transformation leads to a model RH problem for $M^{mod}$.
\begin{rhp}\label{rhp9}
 Find an analysis function $M^{mod}(\xi)$ which satisfies:\\

 $\bullet$ $M^{mod}(\xi)$ is meromorphic in $\mathbb{C}\setminus \mathbb{R}$;\\

 $\bullet$ $M_{+}^{mod}(\xi)=M_{-}^{mod}(\xi)J^{mod}(\lambda_{0}), \xi\in \mathbb{R}$, where
\begin{align}\label{A3}
J^{mod}(\lambda_{0})=\left(\begin{array}{cc}
    1  &  r_{0}\\
  \overline{r_{0}} &  1+\left\vert r_{0}\right\vert^{2}\\
\end{array}\right);
\end{align}

$\bullet$ $M^{mod}(\xi)\rightarrow e^{-\frac{1}{4}i\xi^{2}\sigma_{3}}\xi^{-i\nu(\lambda_{0})\sigma_{3}}$ as $\xi\rightarrow\infty$.
\end{rhp}

This RH problem can be solved unambiguously through using the Liouville's theorem and parabolic cylinder equation. Since the jump matrix $J^{mod}$  is constant, the following equation can be derived
\begin{align}\label{A4}
\frac{\mathrm{d}}{\mathrm{d}\xi}M^{mod}+\left(\begin{array}{cc}
    \frac{i}{2}\xi  &  \beta\\
  \alpha &  -\frac{i}{2}\xi\\
\end{array}\right)M^{mod}=0,
\end{align}
where $\beta=-i\left[M_{1}^{(\lambda_{0})}\right]_{12}, \alpha=i\left[M_{1}^{(\lambda_{0})}\right]_{21}$.
Its solution is
\begin{align}\label{A5}
M^{mod}=\left(\begin{array}{cc}
    M_{11}^{mod}  &  \frac{\frac{i}{2}\xi M_{22}^{mod}-\frac{\mathrm{d}M_{22}^{mod}}{\mathrm{d}\xi}}{\alpha}\\
  \frac{\frac{i}{2}\xi M_{11}^{mod}+\frac{\mathrm{d}M_{11}^{mod}}{\mathrm{d}\xi}}{-\beta} &  M_{22}^{mod}\\
\end{array}\right),
\end{align}
of which $M_{jj}^{mod}, j=1, 2,$ satisfy the standard parabolic cylinder equation
\begin{align}\label{A6}
&\frac{\mathrm{d}^{2}}{\mathrm{d}\xi^{2}}M_{11}^{mod}+(\frac{i}{2}-\alpha\beta+\frac{\xi^{2}}{4})M_{11}^{mod}=0,\notag\\
&\frac{\mathrm{d}^{2}}{\mathrm{d}\xi^{2}}M_{22}^{mod}+(-\frac{i}{2}-\alpha\beta+\frac{\xi^{2}}{4})M_{22}^{mod}=0.
\end{align}
As $\xi\rightarrow\infty$, we have $M_{11}^{mod}\rightarrow e^{-\frac{1}{4}i\xi^{2}}\xi^{-i\nu(\lambda_{0})},$ $ M_{22}^{mod}\rightarrow e^{\frac{1}{4}i\xi^{2}}\xi^{i\nu(\lambda_{0})}$, which results in
\begin{align}\label{A7}
M_{11}^{mod}=\left\{
\begin{array}{lr}
(e^{-\frac{3\pi}{4}i})^{i\nu(\lambda_{0})}D_{-i\nu(\lambda_{0})}(e^{-\frac{3\pi}{4}i}\xi) \qquad \mbox{Im}(\xi)<0,\\
\\
(e^{\frac{\pi}{4}i})^{i\nu(\lambda_{0})}D_{-i\nu(\lambda_{0})}(e^{\frac{\pi}{4}i}\xi) \qquad \qquad \mbox{Im}(\xi)>0,
  \end{array}
\right.
\end{align}
\begin{align}\label{A8}
M_{22}^{mod}=\left\{
\begin{array}{lr}
(e^{-\frac{\pi}{4}i})^{-i\nu(\lambda_{0})}D_{i\nu(\lambda_{0})}(e^{-\frac{\pi}{4}i}\xi) \ \qquad \mbox{Im}(\xi)<0,\\
\\
(e^{\frac{3\pi}{4}i})^{-i\nu(\lambda_{0})}D_{i\nu(\lambda_{0})}(e^{\frac{3\pi}{4}i}\xi) \quad \qquad \mbox{Im}(\xi)>0.
  \end{array}
\right.
\end{align}
Then, one gets
\begin{gather}
M^{mod}_{-}(\xi)^{-1}M^{mod}_{+}(\xi)=M^{mod}_{-}(0)^{-1}M^{mod}_{+}(0)=\notag\\
\left(\begin{array}{cc}
  (e^{-\frac{3\pi}{4}i})^{i\nu}\frac{2^{\frac{-i\nu}{2}}\sqrt{\pi}}{\Gamma(\frac{1+i\nu}{2})}    & e^{-\frac{\pi}{4}i}(e^{-\frac{\pi}{4}i})^{-i\nu}\frac{2^{\frac{1+i\nu}{2}}\sqrt{\pi}}{\alpha\Gamma(\frac{-i\nu}{2})} \\
  e^{-\frac{3\pi}{4}i}(e^{-\frac{3\pi}{4}i})^{i\nu}\frac{2^{\frac{1-i\nu}{2}}\sqrt{\pi}}{\beta\Gamma(\frac{i\nu}{2})} &   (e^{-\frac{\pi}{4}i})^{-i\nu}\frac{2^{\frac{i\nu}{2}}\sqrt{\pi}}{\Gamma(\frac{1-i\nu}{2})}\\
\end{array}\right)^{-1}\notag\\
\left(\begin{array}{cc}
  (e^{\frac{\pi}{4}i})^{i\nu}\frac{2^{\frac{-i\nu}{2}}\sqrt{\pi}}{\Gamma(\frac{1+i\nu}{2})}    & e^{\frac{3\pi}{4}i}(e^{\frac{3\pi}{4}i})^{-i\nu}\frac{2^{\frac{1+i\nu}{2}}\sqrt{\pi}}{\alpha\Gamma(\frac{-i\nu}{2})} \\
  e^{\frac{\pi}{4}i}(e^{\frac{\pi}{4}i})^{i\nu}\frac{2^{\frac{1-i\nu}{2}}\sqrt{\pi}}{\beta\Gamma(\frac{i\nu}{2})} &   (e^{\frac{3\pi}{4}i})^{-i\nu}\frac{2^{\frac{i\nu}{2}}\sqrt{\pi}}{\Gamma(\frac{1-i\nu}{2})}\\
\end{array}\right)\notag\\
=\left(\begin{array}{cc}
    1  &  r_{0}\\
  \overline{r_{0}} &  1+\left\vert r_{0}\right\vert^{2}\\
\end{array}\right).
\end{gather}
Finally, the result comes to
\begin{align}\label{A9}
[M_{1}^{(\lambda_{0})}]_{12}=\frac{\sqrt{2\pi}e^{-\frac{3\pi i}{4}+\frac{\pi\nu(\lambda_{0}) }{2}}}{i\overline{r_{0}}\Gamma(i\nu(\lambda_{0}))},\quad [M_{1}^{(\lambda_{0})}]_{21}=\frac{i\sqrt{2\pi}e^{-\frac{\pi i}{4}+\frac{\pi\nu(\lambda_{0})}{2}}}{r_{0}\Gamma(-i\nu(\lambda_{0}))}.
\end{align}
\end{appendices}

\section{Acknowledgements}
This work was supported by the National Natural Science Foundation of China (No. 12175069 and No. 12235007), Science and Technology Commission of Shanghai Municipality (No. 21JC1402500 and No. 22DZ2229014) and Natural Science Foundation of Shanghai (No. 23ZR1418100).

\end{document}